\theoremstyle{definition}
\theoremstyle{plain}
\theoremstyle{theorem}
\newtheorem{thm}{Theorem}[section]
\numberwithin{equation}{section}
\numberwithin{figure}{section}
\theoremstyle{plain}
\theoremstyle{proposition}
\theoremstyle{plain}
\theoremstyle{corollary}
\newtheorem{cor}[thm]{Corollary}
\theoremstyle{plain}
\theoremstyle{definition}
\theoremstyle{plain}
\theoremstyle{lemma}
\newtheorem{lem}[thm]{Lemma}
\theoremstyle{plain}
\theoremstyle{remark}
\begin{document}

\begin{center}
{\Large \textbf{Some approximation properties of bivariate Bleimann-Butzer
and Hahn operators based on $(p,q)$-integers}}

\bigskip

\textbf{M. Mursaleen} and \textbf{Md. Nasiruzzaman}

Department of\ Mathematics, Aligarh Muslim University, Aligarh--202002, India%
\\[0pt]

mursaleenm@gmail.com; nasir3489@gmail.com \\[0pt]

\bigskip

\bigskip

\textbf{Abstract}
\end{center}

\parindent=8mm {\footnotesize {Recently, Mursaleen et al applied $(p,q)$%
-calculus in approximation theory and introduced $(p,q)$-analogue of
Bernstein operators in \cite{mur7}. In this paper, we construct and
introduce a generalization of the bivariate Bleimann-Butzer-Hahn operators
based on $(p,q)$-integers and obtain Korovkin type approximation theorem of
these operators. Furthermore, we compute the rate of convergence of the
operators by using the modulus of continuity and Lipschitz type maximal
functions.}}

\bigskip

{\footnotesize \emph{Keywords and phrases}: $(p,q)$-integers; $(p,q)$%
-Bernstein operator; $(p,q)$-Bleimann-Butzer-Hahn operators; $q$-Bivariate
Bleimann-Butzer-Hahn operators; Korovkin theorem; modulus of continuity;
Lipschitz type maximal function.}

{\footnotesize \emph{AMS Subject Classification (2010):} 41A10, 41A25,
41A36, 40A30.}

\section{Introduction and Preliminaries}

In 1980 Bleimann, Butzer and Hahn (BBH) \cite{brns} introduced the following
positive linear operators on the space of real functions defined on infinite
interval $[0,\infty )$:
\begin{equation*}
L_{n}(f;x)=\frac{1}{(1+x)^{n}}\sum_{k=0}^{n}f\left( \frac{k}{n-k+1}\right) %
\left[
\begin{array}{c}
n \\
k%
\end{array}%
\right] x^{k},x\geq 0~~~~~~~~~~~~~~~~~~~~~\eqno(1.1)
\end{equation*}

\parindent=8mm The approximation properties of the Bleimann-Butzer and Hahn
operators were studied by many authors, e.g. \cite{m1,m2,m3, brns}. In
approximation theory, $q$-type generalization of Bernstein polynomials was
introduced and studied by Lupaş \cite{lups} and Phillips \cite{philip}.

In \cite{ern},\cite{ali}, the BBH-type operators based on $q$-integers are
defined as follows:
\begin{equation*}
L_{n}^{q}(f;x)=\frac{1}{\ell _{n}(x)}\sum_{k=0}^{n}f\left( \frac{[k]_{q}}{%
[n-k+1]_{q}q^{k}}\right) q^{\frac{k(k-1)}{2}}\left[
\begin{array}{c}
n \\
k%
\end{array}%
\right] _{q}x^{k}~~~~~~~~~~~~~~\eqno(1.2)
\end{equation*}%
where $\ell _{n}(x)=\prod_{s=0}^{n-1}(1+q^{s}x)$.\newline

The bivariate case was introduced by D.D. Stancu \cite{dd} who studied the
bivariate Bernstein polynomials and estimated the order of approximation for
operators (1.2).\newline

Very recently, Mursaleen et al applied $(p,q)$-calculus in approximation
theory and introduced first $(p,q)$-analogue of Bernstein operators \cite%
{mur7} and studied approximation properties of $(p,q)$-analogue of
Bernstein-Stancu operators in \cite{mur8}. They also have introduced,
Bleimann-Butzer and Hahn operators \cite{et} based on $(p,q)$-integers as
follows:\newline

\begin{equation*}
L_{n}^{(p,q)}(f;x)=\frac{1}{\ell _{n}^{(p,q)}(x)}\sum_{k=0}^{n}f\left( \frac{%
p^{n-k+1}[k]_{p,q}}{[n-k+1]_{p,q}q^{k}}\right) p^{\frac{(n-k)(n-k-1)}{2}}q^{%
\frac{k(k-1)}{2}}\left[
\begin{array}{c}
n \\
k%
\end{array}%
\right] _{p,q}x^{k}~~~~~~~~~~~~~~~~~~~~~~~~\eqno(1.3)
\end{equation*}%
where, ~~$x\geq 0,~~0<q<p\leq 1$
\begin{equation*}
\frac{1}{\ell _{n}^{(p,q)}(x)}%
=\prod_{s=0}^{n-1}(p^{s}+q^{s}x),~~~~~~~~~~~~~~~~~~~~\eqno(1.4)
\end{equation*}%
and $f$ is defined on semiaxis $\mathbb{R}_{+}$.\newline
The Euler identity based on $(p,q)$-analogue is defined by
\begin{equation*}
\prod_{s=0}^{n-1}(p^{s}+q^{s}x)=\sum_{k=0}^{n}p^{\frac{(n-k)(n-k-1)}{2}}q^{%
\frac{k(k-1)}{2}}\left[
\begin{array}{c}
n \\
k%
\end{array}%
\right] _{p,q}x^{k}~~~~~~~~~~~~~~~~~~~~\eqno(1.5)
\end{equation*}

Let us recall certain notations on $(p,q)$-calculus.

The $(p,q)$ integers $[n]_{p,q}$ are defined by
\begin{equation*}
[n]_{p,q}=\frac{p^n-q^n}{p-q},~~~~~~~n=0,1,2,\cdots, ~~0<q<p\leq 1.
\end{equation*}
whereas $q$-integers are given by
\begin{equation*}
[n]_{q}=\frac{1-q^n}{1-q},~~~~~~~n=0,1,2,\cdots, ~~0<q< 1.
\end{equation*}%
\newline

It is very clear that $q$-integers and $(p,q)$-integers are different, that
is we cannot obtain $(p,q)$ integers just by replacing $q$ by $\frac{q}{p}$
in the definition of $q$-integers but if we put $p=1$ in definition of $%
(p,q) $ integers then $q$-integers become a particular case of $(p,q)$
integers. Thus we can say that $(p,q)$-calculus can be taken as a
generalization of $q$-calculus.\newline

Now by some simple calculation and induction on $n,$ we have $(p,q)$%
-binomial expansion as follows
\begin{equation*}
(ax+by)_{p,q}^{n}:=\sum\limits_{k=0}^{n}p^{\frac{(n-k)(n-k-1)}{2}}q^{\frac{%
k(k-1)}{2}} \left[
\begin{array}{c}
n \\
k%
\end{array}%
\right] _{p,q}a^{n-k}b^{k}x^{n-k}y^{k}
\end{equation*}
\begin{equation*}
(x+y)_{p,q}^{n}=(x+y)(px+qy)(p^2x+q^2y)\cdots (p^{n-1}x+q^{n-1}y)
\end{equation*}
\begin{equation*}
(1-x)_{p,q}^{n}=(1-x)(p-qx)(p^2-q^2x)\cdots (p^{n-1}-q^{n-1}x),
\end{equation*}%
\newline

and the $(p,q)$-binomial coefficients are defined by
\begin{equation*}
\left[
\begin{array}{c}
n \\
k%
\end{array}%
\right] _{p,q}=\frac{[n]_{p,q}!}{[k]_{p,q}![n-k]_{p,q}!}.
\end{equation*}

Again it can be easily verified that $(p,q)$-binomial expansion is different
from $q$-binomial expansion and is not a replacement of $q$ by $\frac{q}{p}$.%
\newline

By some simple calculation, we have the following relation
\begin{equation*}
q^{k}[n-k+1]_{p,q}=[n+1]_{p,q}-p^{n-k+1}[k]_{p,q}.~~~~~~~~~~~~~~~~~~~~\eqno%
(1.6)
\end{equation*}%
For details on $q$-calculus and $(p,q)$-calculus, one can refer \cite{ali,vp}%
, \cite{sad,vivek}, respectively.\newline

Now based on $(p,q)$-integers, we construct $(p,q)$-analogue of bivariate
BBH-operators, and we call it as $(p,q)$-bivariate Bleimann-Butzer-Hahn
operators and investigate its Korovokin type approximation properties, by
using the following test functions
\begin{equation*}
e_{ij}=\left( \frac{u}{1+u}\right) ^{i}\left( \frac{v}{1+v}\right)
^{j}~~~~~~~~~~~i,j=0,1,2.~~~~~~~~~~~~~~~~~~~~~\eqno(1.7)
\end{equation*}

Also for a space of generalized Lipschitz-type maximal functions, we give a
pointwise estimation.

\section{Construction of operators}

Let $\mathbb{R}^2_+ = [0,\infty)\times [0,\infty),~~f: \mathbb{R}^2_+ \to
\mathbb{R}$ and $0<q_{n_1},q_{n_2}<p_{n_1},p_{n_2} \leq 1$. We define the
bivariate extension of the $(p,q)$-Bleimann-Butzer and Hahn-type operators
based on $(p,q)$-integers as follows:\newline
\newline
$L^{(p_{n_1},p_{n_2}),(q_{n_1},q_{n_2})}_{n_1,n_2}(f;x,y)=\frac{1}{%
\ell^{(p_{n_1},q_{n_1})}_{n_1}(x)} \frac{1}{\ell^{(p_{n_2},q_{n_2})}_{n_2}(y)%
} \sum_{k_1=0}^{n_1}\sum_{k_2=0}^{n_2} f \left( \frac{%
p_{n_1}^{n_1-k_1+1}[k_1]_{p_{n_1},q_{n_1}}}{%
[n_1-k_1+1]_{p_{n_1},q_{n_1}}q_{n_1}^{k_1} }, \frac{%
p_{n_2}^{n_2-k_2+1}[k_2]_{p_{n_2},q_{n_2}}}{%
[n_2-k_2+1]_{p_{n_2},q_{n_2}}q_{n_2}^{k_2} }\right)$

\begin{equation*}
\times p_{n_{1}}^{\frac{(n_{1}-k_{1})(n_{1}-k_{1}-1)}{2}}q_{n_{1}}^{\frac{%
k_{1}(k_{1}-1)}{2}}p_{n_{2}}^{\frac{(n_{2}-k_{2})(n_{2}-k_{2}-1)}{2}%
}q_{n_{2}}^{\frac{k_{2}(k_{2}-1)}{2}}\left[
\begin{array}{c}
n_{1} \\
k_{1}%
\end{array}%
\right] _{p_{n_{1}},q_{n_{1}}}\left[
\begin{array}{c}
n_{2} \\
k_{2}%
\end{array}%
\right] _{p_{n_{2}},q_{n_{2}}}x^{k_{1}}y^{k_{2}}~~~~~~~~~~~~~~~~~~~~~~~~~~~%
\eqno(2.1)
\end{equation*}%
where
\begin{equation*}
\frac{1}{\ell _{n_{1}}^{(p_{n_{1}},q_{n_{1}})}(x)}%
=\prod_{s_{1}=0}^{n_{1}-1}(p_{n_{1}}^{s_{1}}+q_{n_{1}}^{s_{1}}x),~~~~~~~~%
\frac{1}{\ell _{n_{2}}^{(p_{n_{2}},q_{n_{2}})}(y)}%
=\prod_{s_{2}=0}^{n_{2}-1}(p_{n_{2}}^{s_{2}}+q_{n_{2}}^{s_{2}}y).
\end{equation*}%
It is easy to check that (2.1) is linear and positive. For $%
p_{n_{1}}=p_{n_{2}}=1$, the operators (2.1) are reduced to $q$-bivariate BBH
operators \cite{ern,biv}.

\begin{lem}
Let $e_{ij}:\mathbb{R}_{+}^{2}\rightarrow \lbrack 0,1)$ be the two
dimensional test functions defined in (1.7). Then the $(p,q)$-bivariate BBH
operators defined in (2.1) satisfy the following identities.

\begin{enumerate}
\item[$($i$)$] $%
L^{(p_{n_1},p_{n_2}),(q_{n_1},q_{n_2})}_{n_1,n_2}(e_{00};x,y)=1$

\item[$($ii$)$] $%
L^{(p_{n_1},p_{n_2}),(q_{n_1},q_{n_2})}_{n_1,n_2}(e_{10};x,y)=\frac{[n_1]_{P{%
n_1},q_{n_1}}}{[n_2+1]_{p_{n_1},q_{n_1}}} \frac{x}{1+x}$

\item[$($iii$)$] $%
L^{(p_{n_1},p_{n_2}),(q_{n_1},q_{n_2})}_{n_1,n_2}(e_{01};x,y)=\frac{[n_2]_{P{%
n_2},q_{n_2}}}{[n_2+1]_{p_{n_2},q_{n_2}}} \frac{y}{1+y}$

\item[$($iv$)$] $%
L^{(p_{n_1},p_{n_2}),(q_{n_1},q_{n_2})}_{n_1,n_2}(e_{20};x,y)= \frac{%
p_{n_1}q_{n_1}^2[n_1]_{p_{n_1},q_{n_1}}[n_1-1]_{p_{n_1},q_{n_1}}}{%
[n_1+1]_{p_{n_1},q_{n_1}}^2}\frac{x^2}{(1+x)(p_{n_1}+q_{n_1}x)} +\frac{%
p_{n_1}^{{n_1}+1}[n_1]_{p_{n_1},q_{n_1}}}{[n_1+1]_{p_{n_1},q_{n_1}}^2}\left(%
\frac{x}{1+x}\right)$

\item[$($v$)$] $%
L^{(p_{n_1},p_{n_2}),(q_{n_1},q_{n_2})}_{n_1,n_2}(e_{02};x,y)= \frac{%
p_{n_2}q_{n_2}^2[n_2]_{p_{n_2},q_{n_2}}[n_2-1]_{p_{n_2},q_{n_2}}}{%
[n_2+1]_{p_{n_2},q_{n_2}}^2}\frac{y^2}{(1+y)(p_{n_2}+q_{n_2}y)} +\frac{%
p_{n_2}^{{n_2}+1}[n_2]_{p_{n_2},q_{n_2}}}{[n_2+1]_{p_{n_2},q_{n_2}}^2}\left(%
\frac{y}{1+y}\right)$.
\end{enumerate}
\end{lem}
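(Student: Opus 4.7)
The plan is to exploit the tensor-product structure of the bivariate operator. The double summation in $(2.1)$ factors completely into a $(k_1,x)$-part and a $(k_2,y)$-part, and each test function $e_{ij}(u,v)=\varphi_i(u)\varphi_j(v)$ with $\varphi_m(t)=\left(\tfrac{t}{1+t}\right)^m$ is itself a product of one-variable functions. Hence
\[
L^{(p_{n_1},p_{n_2}),(q_{n_1},q_{n_2})}_{n_1,n_2}(e_{ij};x,y)
= L_{n_1}^{(p_{n_1},q_{n_1})}(\varphi_i;x)\, L_{n_2}^{(p_{n_2},q_{n_2})}(\varphi_j;y),
\]
so it suffices to establish the three univariate moment identities $L_n^{(p,q)}(\varphi_0;x)=1$, $L_n^{(p,q)}(\varphi_1;x)=\tfrac{[n]_{p,q}}{[n+1]_{p,q}}\tfrac{x}{1+x}$, and the formula for $L_n^{(p,q)}(\varphi_2;x)$ matching (iv)/(v). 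Parts (i)--(v) of the lemma then fall out by direct multiplication, with $\varphi_0\equiv 1$ handling the mixed cases $e_{10}$ and $e_{01}$.

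For $\varphi_0$, the identity is an immediate restatement of the Euler expansion $(1.5)$, which cancels the normalizing factor $\ell_n^{(p,q)}(x)$. For $\varphi_1$, the crucial reduction uses $(1.6)$: at the $k$-th node $u_k=\tfrac{p^{n-k+1}[k]_{p,q}}{q^k[n-k+1]_{p,q}}$ one obtains
\[
\frac{u_k}{1+u_k}=\frac{p^{n-k+1}[k]_{p,q}}{q^k[n-k+1]_{p,q}+p^{n-k+1}[k]_{p,q}}=\frac{p^{n-k+1}[k]_{p,q}}{[n+1]_{p,q}}.
\]
Substituting this into $(1.3)$, invoking $[k]_{p,q}\begin{bmatrix}n\\k\end{bmatrix}_{p,q}=[n]_{p,q}\begin{bmatrix}n-1\\k-1\end{bmatrix}_{p,q}$, shifting the summation index $k\mapsto k+1$, and recognizing the resulting sum via the Euler expansion $(1.5)$ for $n-1$ yields a factor of $\prod_{s=0}^{n-2}(p^s+q^s x)$, which combines with $\tfrac{1}{\ell_n^{(p,q)}(x)}$ to give precisely $\tfrac{[n]_{p,q}}{[n+1]_{p,q}}\cdot\tfrac{x}{1+x}$.

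For $\varphi_2$, squaring the previous identity gives $\left(\tfrac{u_k}{1+u_k}\right)^2=\tfrac{p^{2(n-k+1)}[k]_{p,q}^2}{[n+1]_{p,q}^2}$, and I would then split $[k]_{p,q}^2$ via the $(p,q)$-recursion $[k]_{p,q}=p[k-1]_{p,q}+q^{k-1}$ as
\[
[k]_{p,q}^2=p\,[k]_{p,q}[k-1]_{p,q}+q^{k-1}[k]_{p,q}.
\]
Two applications of the binomial-coefficient identity, followed by the index shift $k\mapsto k+2$ and Euler's identity for $n-2$, reduce the first piece to the term carrying $\tfrac{x^2}{(1+x)(p+qx)}$ with the coefficient $\tfrac{p q^2[n]_{p,q}[n-1]_{p,q}}{[n+1]_{p,q}^2}$; the second piece reduces, exactly as in the $\varphi_1$ computation, to the correction $\tfrac{p^{n+1}[n]_{p,q}}{[n+1]_{p,q}^2}\cdot\tfrac{x}{1+x}$.

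The main obstacle throughout is purely bookkeeping: tracking the weights $p^{(n-k)(n-k-1)/2}q^{k(k-1)/2}$ as the summation index shifts, so that after writing $j=k-1$ (respectively $j=k-2$) the residual exponents align exactly with the Euler identity for $n-1$ (respectively $n-2$) evaluated at a suitably rescaled argument. This is what produces the factors $(p+qx)$ and the powers $p^{n+1}$ appearing in (iv) and (v). Once these exponent accounts balance, the tensor-product observation in the first paragraph delivers the bivariate identities (i)--(v) with no further work.
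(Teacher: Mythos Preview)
Your overall strategy---reducing to the univariate moments via the tensor-product structure, then handling $\varphi_0,\varphi_1,\varphi_2$ by the Euler expansion $(1.5)$, the node identity coming from $(1.6)$, and an index shift---is exactly what the paper does, and parts (i)--(iii) go through as you describe.

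The gap is in your treatment of $\varphi_2$. You split $[k]_{p,q}^2$ via the recursion $[k]_{p,q}=p[k-1]_{p,q}+q^{k-1}$, obtaining $[k]_{p,q}^2=p[k]_{p,q}[k-1]_{p,q}+q^{k-1}[k]_{p,q}$. This identity is correct, but the two resulting sums do \emph{not} separate into the two terms of the stated formula as you claim. If you carry the second piece through (use $[k]\begin{bmatrix}n\\k\end{bmatrix}=[n]\begin{bmatrix}n-1\\k-1\end{bmatrix}$, shift, and match against $(p^2+q^2x)_{p,q}^{n-1}$), you get
\[
\frac{p^2[n]_{p,q}}{[n+1]_{p,q}^2}\cdot\frac{x\,(p^n+q^n x)}{(1+x)(p+qx)},
\]
which is not of the form $C\cdot\dfrac{x}{1+x}$; there is a leftover factor $(p^n+q^n x)/(p+qx)$. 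The first piece likewise comes out with coefficient $p^3q$ rather than $pq^2$. The total is of course the same, but you would then need an additional algebraic recombination step that you have not indicated.

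The paper avoids this by using the \emph{other} $(p,q)$-recursion, $[k]_{p,q}=q[k-1]_{p,q}+p^{k-1}$, giving $[k]_{p,q}^2=q[k]_{p,q}[k-1]_{p,q}+p^{k-1}[k]_{p,q}$. With this choice the $p^{k-1}$ combines with the node factor $p^{2(n-k+1)}$ so that after the shift the resulting sum matches $(p+qx)_{p,q}^{n-1}=\prod_{t=1}^{n-1}(p^t+q^t x)$, and dividing by $\prod_{s=0}^{n-1}(p^s+q^s x)$ leaves exactly $1/(1+x)$. That is what produces the clean $\dfrac{x}{1+x}$ correction term directly. Swap your recursion for this one and your argument goes through.
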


\begin{proof}

\begin{enumerate}
\item[$($i$)$] $%
L_{n_{1},n_{2}}^{(p_{n_{1}},p_{n_{2}}),(q_{n_{1}},q_{n_{2}})}\left(
e_{00};x,y\right) =\frac{1}{\ell _{n_{1}}^{(p_{n_{1}},q_{n_{1}})}(x)}\frac{1%
}{\ell _{n_{2}}^{(p_{n_{2}},q_{n_{2}})}(y)}\sum_{k_{1}=1}^{n_{1}}%
\sum_{k_{2}=1}^{n_{2}}$\newline
$\times p_{n_{1}}^{\frac{(n_{1}-k_{1})(n_{1}-k_{1}-1)}{2}}q_{n_{n_{1}}}^{%
\frac{k_{1}(k_{1}-1)}{2}}p_{n_{2}}^{\frac{(n_{2}-k_{2})(n_{2}-k_{2}-1)}{2}%
}q_{{n_{2}}}^{\frac{k_{2}(k_{2}-1)}{2}}\left[
\begin{array}{c}
n_{1} \\
k_{1}%
\end{array}%
\right] _{p_{n_{1}},q_{n_{1}}}\left[
\begin{array}{c}
n_{2} \\
k_{2}%
\end{array}%
\right] _{p_{n_{2}},q_{n_{2}}}x^{k_{1}}y^{k_{2}}$\newline

For $0<q<p\leq 1$, we have
\begin{equation*}
\sum_{k=0}^{n}p^{\frac{(n-k)(n-k-1)}{2}}q^{\frac{k(k-1)}{2}}\left[
\begin{array}{c}
n \\
k%
\end{array}%
\right] _{p,q}x^{k}=\prod_{k=0}^{n-1}(p^{k}+q^{k}x)=\frac{1}{\ell
_{n}^{p,q}(x)}.
\end{equation*}

\item[$($ii$)$] Let $u=\frac{%
p_{n_{1}}^{n_{1}-k_{1}+1}[k_{1}]_{p_{n_{1}},q_{n_{1}}}}{%
[n_{1}-k_{1}+1]_{p_{n_{1}},q_{n_{1}}}q_{n_{1}}^{k_{1}}}$. Then $\frac{u}{1+u}%
=\frac{[k_{1}]_{p_{n_{1}},q_{n_{1}}}p_{n_{1}}^{n_{1}-k_{1}+1}}{%
[n_{1}+1]_{p_{n_{1}},q_{n_{1}}}}$\newline
$L_{n_{1},n_{2}}^{(p_{n_{1}},p_{n_{2}}),(q_{n_{1}},q_{n_{2}})}\left(
e_{10};x,y\right) =\frac{1}{\ell _{n_{1}}^{(p_{n_{1}},q_{n_{1}})}(x)}\frac{1%
}{\ell _{n_{2}}^{(p_{n_{2}},q_{n_{2}})}(y)}\sum_{k_{1}=1}^{n_{1}}%
\sum_{k_{2}=1}^{n_{2}}\frac{%
[k_{1}]_{p_{n_{1}},q_{n_{1}}}p_{n_{1}}^{n_{1}-k_{1}+1}}{%
[n_{1}+1]_{p_{n_{1}},q_{n_{1}}}}\frac{%
[k_{2}]_{p_{n_{2}},q_{n_{2}}}p_{n_{2}}^{n_{2}-k_{2}+1}}{%
[n_{2}+1]_{p_{n_{2}},q_{n_{2}}}}$\newline
$\times p_{n_{1}}^{\frac{(n_{1}-k_{1})(n_{1}-k_{1}-1)}{2}}q_{n_{1}}^{\frac{%
k_{1}(k_{1}-1)}{2}}p_{n_{2}}^{\frac{(n_{2}-k_{2})(n_{2}-k_{2}-1)}{2}%
}q_{n_{2}}^{\frac{k_{2}(k_{2}-1)}{2}}\left[
\begin{array}{c}
n_{1} \\
k_{1}%
\end{array}%
\right] _{p_{n_{1}},q_{n_{1}}}\left[
\begin{array}{c}
n_{2} \\
k_{2}%
\end{array}%
\right] _{p_{n_{2}},q_{n_{2}}}x^{k_{1}}y^{k_{2}}$\newline
$=\frac{1}{\ell _{n_{1}}^{(p_{n_{1}},q_{n_{1}})}(x)}\frac{%
p_{n_{1}}[n_{1}]_{p_{n_{1}},q_{n_{1}}}}{[n_{1}+1]_{p_{n_{1}},q_{n_{1}}}}%
\sum_{k_{1}=1}^{n_{1}}p_{n_{1}}^{n_{1}-k_{1}+1+\frac{%
(n_{1}-k_{1})(n_{1}-k_{1}-1)}{2}}q_{n_{1}}^{\frac{k_{1}(k_{1}-1)}{2}}\left[
\begin{array}{c}
n_{1}-1 \\
k_{1}-1%
\end{array}%
\right] _{p_{n_{1}},q_{n_{1}}}(q_{n_{1}}x)^{k_{1}}$\newline
$=\frac{x}{\ell _{n_{1}}^{(p_{n_{1}},q_{n_{1}})}(x)}\frac{%
p_{n_{1}}[n_{1}]_{p_{n_{1}},q_{n_{1}}}}{[n_{1}+1]_{p_{n_{1}},q_{n_{1}}}}%
\sum_{k_{1}=0}^{n_{1}}p_{n_{1}-1}^{\frac{(n_{1}-k_{1})(n_{1}-k_{1}-1)}{2}%
}q_{n_{1}}^{\frac{k_{1}(k_{1}-1)}{2}}\left[
\begin{array}{c}
n_{1}-1 \\
k_{1}%
\end{array}%
\right] _{p_{n_{1}},q_{n_{1}}}(q_{n_{1}}x)^{k_{1}}$\newline
$=\frac{p_{n_{1}}[n_{1}]_{p_{n_{1}},q_{n_{1}}}}{%
[n_{1}+1]_{p_{n_{1}},q_{n_{1}}}}\frac{x}{1+x}.$\newline


\item[$($iii$)$] It can be proved in a similar way.

\item[$($iv$)$] $L_{p,q}^{n}\left( e_{20};x,y\right) =\frac{1}{\ell
_{n_{1}}^{(p_{n_{1}},q_{n_{1}})}(x)}\frac{1}{\ell
_{n_{2}}^{(p_{n_{2}},q_{n_{2}})}(y)}\sum_{k_{1}=1}^{n_{1}}%
\sum_{k_{2}=1}^{n_{2}}\frac{%
[k_{1}]_{p_{n_{1}},q_{n_{1}}}^{2}p_{n_{1}}^{2(n_{1}-k_{1}+1)}}{%
[n_{1}+1]_{p_{n_{1}},q_{n_{1}}}^{2}}p_{n_{1}}^{\frac{%
(n_{1}-k_{1})(n_{1}-k_{1}-1)}{2}}q_{n_{1}}^{\frac{k_{1}(k_{1}-1)}{2}}$%
\newline
$\times p_{n_{2}}^{\frac{(n_{2}-k_{2})(n_{2}-k_{2}-1)}{2}}q_{n_{2}}^{\frac{%
k_{2}(k_{2}-1)}{2}}\left[
\begin{array}{c}
n_{1} \\
k_{1}%
\end{array}%
\right] _{p_{n_{1}},q_{n_{1}}}\left[
\begin{array}{c}
n_{2} \\
k_{2}%
\end{array}%
\right] _{p_{n_{1}},q_{n_{1}}}x^{k_{1}}y^{k_{2}}$.\newline
Using (1.6), we get\newline

$L_{p,q}^{n}\left( \frac{u^{2}}{(1+u)^{2}};x\right) $\newline
$=\frac{1}{\ell _{n_{1}}^{(p_{n_{1}},q_{n_{1}})}(x)}\sum_{k_{1}=2}^{n_{1}}%
\frac{%
q_{n_{1}}[k_{1}]_{p_{n_{1}},q_{n_{1}}}[k_{1}-1]_{p_{n_{1}},q_{n_{1}}}p_{n_{1}}^{2n_{1}-2k_{1}+2}%
}{[n_{1}+1]_{p_{n_{1}},q_{n_{1}}}^{2}}p_{n_{1}}^{\frac{%
(n_{1}-k_{1})(n_{1}-k_{1}-1)}{2}}q_{n_{1}}^{\frac{k_{1}(k_{1}-1)}{2}}\left[
\begin{array}{c}
n_{1} \\
k_{1}%
\end{array}%
\right] _{p_{n_{1}},q_{n_{1}}}x^{k_{1}}$\newline
\begin{equation*}
+\frac{1}{\ell _{n_{1}}^{(p_{n_{1}},q_{n_{1}})}(x)}%
\sum_{k_{1}=1}^{n_{1}}p_{n_{1}}^{k_{1}-1}\frac{%
[k_{1}]_{p_{n_{1}},q_{n_{1}}}p_{n_{1}}^{2n_{1}-2k_{1}+2}}{%
[n_{1}+1]_{p_{n_{1}},q_{n_{1}}}^{2}}p_{n_{1}}^{\frac{%
(n_{1}-k_{1})(n_{1}-k_{1}-1)}{2}}q_{n_{1}}^{\frac{k_{1}(k_{1}-1)}{2}}\left[
\begin{array}{c}
n_{1} \\
k_{1}%
\end{array}%
\right] _{p_{n_{1}},q_{n_{1}}}x^{k_{1}}
\end{equation*}

\begin{equation*}
=\frac{1}{\ell _{n_{1}}^{(p_{n_{1}},q_{n_{1}})}(x)}\frac{%
q_{n_{1}}[n_{1}]_{p_{n_{1}},q_{n_{1}}}[n_{1}-1]_{p_{n_{1}},q_{n_{1}}}}{%
[n_{1}+1]_{p_{n_{1}},q_{n_{1}}}^{2}}\sum_{k_{1}=2}^{n_{1}}p_{n_{1}}^{\left(
(2n_{1}-2k_{1}+2)+\frac{(n_{1}-k_{1})(n_{1}-k_{1}-1)}{2}\right) }q_{n_{1}}^{%
\frac{k_{1}(k_{1}-1)}{2}}\left[
\begin{array}{c}
n_{1}-2 \\
k_{1}-2%
\end{array}%
\right] _{p_{n_{1}},q_{n_{1}}}x^{k_{1}}
\end{equation*}

\begin{equation*}
+\frac{1}{\ell _{n_{1}}^{(p_{n_{1}},q_{n_{1}})}(x)}\frac{%
[n_{1}]_{p_{n_{1}},q_{n_{1}}}}{[n_{1}+1]_{p_{n_{1}},q_{n_{1}}}^{2}}%
\sum_{k_{1}=1}^{n_{1}}p_{n_{1}}^{\left( (k_{1}-1)+(2n_{1}-2k_{1}+2)+\frac{%
(n_{1}-k_{1})(n_{1}-k_{1}-1)}{2}\right) }q_{n_{n_{1}}}^{\frac{k_{1}(k_{1}-1)%
}{2}}\left[
\begin{array}{c}
n_{1}-1 \\
k_{1}-1%
\end{array}%
\right] _{p_{n_{1}},q_{n_{1}}}x^{k_{1}}
\end{equation*}

\begin{equation*}
=x^{2}\frac{1}{\ell _{n_{1}}^{(p_{n_{1}},q_{n_{1}})}(x)}\frac{%
q_{n_{1}}[n_{1}]_{p_{n_{1}},q_{n_{1}}}[n_{1}-1]_{p_{n_{1}},q_{n_{1}}}}{%
[n_{1}+1]_{p_{n_{1}},q_{n_{1}}}^{2}}\sum_{k_{1}=0}^{n_{1}-2}p_{n_{1}}^{%
\left( (2n_{1}-2k_{1}-2)+\frac{(n_{1}-k_{1}-2)(n_{1}-k_{1}-3)}{2}\right)
}q_{n_{1}}^{\frac{(k_{1}+1)(k_{1}+2)}{2}}\left[
\begin{array}{c}
n_{1}-2 \\
k_{1}%
\end{array}%
\right] _{p_{n_{1}},q_{n_{1}}}x^{k_{1}}
\end{equation*}

\begin{equation*}
+x\frac{1}{\ell _{n_{1}}^{(p_{n_{1}},q_{n_{1}})}(x)}\frac{%
[n_{1}]_{p_{n_{1}},q_{n_{1}}}}{[n_{1}+1]_{p_{n_{1}},q_{n_{1}}}^{2}}%
\sum_{k_{1}=0}^{n_{1}-1}p_{n_{1}}^{\left( k_{1}+(2n_{1}-2k_{1})+\frac{%
(n_{1}-k_{1}-1)(n_{1}-k_{1}-2)}{2}\right) }q_{n_{1}}^{\frac{k_{1}(k_{1}+1)}{2%
}}\left[
\begin{array}{c}
n_{1}-1 \\
k_{1}%
\end{array}%
\right] _{p_{n_{1}},q_{n_{1}}}x^{k_{1}}
\end{equation*}

\begin{equation*}
=x^{2}\frac{1}{\ell _{n_{1}}^{(p_{n_{1}},q_{n_{1}})}(x)}\frac{%
p_{n_{1}}q_{n_{1}}^{2}[n_{1}]_{p_{n_{1}},q_{n_{1}}}[n_{1}-1]_{p_{n_{1}},q_{n_{1}}}%
}{[n_{1}+1]_{p_{n_{1}},q_{n_{1}}}^{2}}\sum_{k_{1}=0}^{n_{1}-2}p_{n_{1}}^{%
\frac{(n_{1}-k_{1})(n_{1}-k_{1}-1)}{2}}q_{{n_{1}}}^{\frac{k_{1}(k_{1}-1)}{2}}%
\left[
\begin{array}{c}
n_{1}-2 \\
k_{1}%
\end{array}%
\right] _{p_{n_{1}},q_{n_{1}}}(q_{n_{1}}^{2}x)^{k_{1}}
\end{equation*}

\begin{equation*}
+x\frac{1}{\ell _{n_{1}}^{(p_{n_{1}},q_{n_{1}})}(x)}\frac{%
p_{n_{1}}^{n_{1}+1}[n_{1}]_{p_{n_{1}},q_{n_{1}}}}{%
[n_{1}+1]_{p_{n_{1}},q_{n_{1}}}^{2}}\sum_{k_{1}=0}^{n_{1}-1}p_{n_{1}}^{\frac{%
(n_{1}-k_{1})(n_{1}-k_{1}-1)}{2}}q_{n_{1}}^{\frac{k_{1}(k_{1}-1)}{2}}\left[
\begin{array}{c}
n_{1}-1 \\
k_{1}%
\end{array}%
\right] _{p_{n_{1}},q_{n_{1}}}(q_{n_{1}}x)^{k_{1}}
\end{equation*}

\begin{equation*}
=\frac{%
p_{n_{1}}q_{n_{1}}^{2}[n_{1}]_{p_{n_{1}},q_{n_{1}}}[n_{1}-1]_{p_{n_{1}},q_{n_{1}}}%
}{[n_{1}+1]_{p_{n_{1}},q_{n_{1}}}^{2}}\frac{x^{2}}{%
(1+x)(p_{n_{1}}+q_{n_{1}}x)}+\frac{%
p_{n_{1}}^{n_{1}+1}[n_{1}]_{p_{n_{1}},q_{n_{1}}}}{%
[n_{1}+1]_{p_{n_{1}},q_{n_{1}}}^{2}}\left( \frac{x}{1+x}\right) .
\end{equation*}

\item[$($v$)$] It can be proved in a similar way.
\end{enumerate}
\end{proof}

\begin{lem}
The operators defined in (2.1) satisfy the following conditions.

\begin{enumerate}
\item[$($i$)$] $%
L^{(p_{n_1},p_{n_2}),(q_{n_1},q_{n_2})}_{n_1,n_2}(f;x,y)=A^{(p_{n_1},q_{n_1}),x}_{n_1}\left( B^{(p_{n_2},q_{n_2}),y}_{n_2}(f;x,y)\right)
$

\item[$($ii$)$] $%
L^{(p_{n_1},p_{n_2}),(q_{n_1},q_{n_2})}_{n_1,n_2}(f;x,y)=B^{(p_{n_2},q_{n_2}),y}_{n_2}\left( A^{(p_{n_1},q_{n_1}),x}_{n_1}(f;x,y)\right)
$
\end{enumerate}
\end{lem}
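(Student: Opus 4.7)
The plan is to exhibit the bivariate operator as an ``honest'' tensor product of two univariate $(p,q)$-BBH operators and then invoke the elementary fact that a tensor product of linear operators on a finite sum may be decomposed as either iterated composition.

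First, I would introduce the one-variable pieces. Define the univariate $(p,q)$-BBH operator acting on the $x$-slot,
\begin{equation*}
A_{n_{1}}^{(p_{n_{1}},q_{n_{1}}),x}(g;x)=\frac{1}{\ell _{n_{1}}^{(p_{n_{1}},q_{n_{1}})}(x)}\sum_{k_{1}=0}^{n_{1}}g\!\left(\frac{p_{n_{1}}^{n_{1}-k_{1}+1}[k_{1}]_{p_{n_{1}},q_{n_{1}}}}{[n_{1}-k_{1}+1]_{p_{n_{1}},q_{n_{1}}}q_{n_{1}}^{k_{1}}}\right) a_{n_{1},k_{1}}(x),
\end{equation*}
where $a_{n_{1},k_{1}}(x)=p_{n_{1}}^{(n_{1}-k_{1})(n_{1}-k_{1}-1)/2}q_{n_{1}}^{k_{1}(k_{1}-1)/2}\bigl[{}^{n_{1}}_{k_{1}}\bigr]_{p_{n_{1}},q_{n_{1}}}x^{k_{1}}$, and analogously define $B_{n_{2}}^{(p_{n_{2}},q_{n_{2}}),y}$ acting on the $y$-slot with coefficients $b_{n_{2},k_{2}}(y)$. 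When $A_{n_{1}}^{(p_{n_{1}},q_{n_{1}}),x}$ is applied to a function of two variables it acts on the $x$-variable only, regarding $y$ as a fixed parameter; similarly for $B_{n_{2}}^{(p_{n_{2}},q_{n_{2}}),y}$.

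Next, I would observe directly from (2.1) that the bivariate kernel factors:
\begin{equation*}
L_{n_{1},n_{2}}^{(p_{n_{1}},p_{n_{2}}),(q_{n_{1}},q_{n_{2}})}(f;x,y)=\frac{1}{\ell _{n_{1}}^{(p_{n_{1}},q_{n_{1}})}(x)\,\ell _{n_{2}}^{(p_{n_{2}},q_{n_{2}})}(y)}\sum_{k_{1}=0}^{n_{1}}\sum_{k_{2}=0}^{n_{2}} f(u_{k_{1}},v_{k_{2}})\,a_{n_{1},k_{1}}(x)\,b_{n_{2},k_{2}}(y),
\end{equation*}
where $u_{k_{1}}$ and $v_{k_{2}}$ are the two nodes appearing inside $f$ in (2.1). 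Since $a_{n_{1},k_{1}}(x)$ and $1/\ell _{n_{1}}^{(p_{n_{1}},q_{n_{1}})}(x)$ depend only on $k_{1}$ and $x$, and $b_{n_{2},k_{2}}(y),\,1/\ell _{n_{2}}^{(p_{n_{2}},q_{n_{2}})}(y)$ depend only on $k_{2}$ and $y$, the double sum is genuinely a product in the index variables and I may interchange the order of summation at will.

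Finally, reading the iterated expression inside out yields (i): fixing $x$ and $k_{1}$, the inner sum over $k_{2}$ together with the factor $1/\ell _{n_{2}}^{(p_{n_{2}},q_{n_{2}})}(y)$ is exactly $B_{n_{2}}^{(p_{n_{2}},q_{n_{2}}),y}(f(u_{k_{1}},\cdot);y)$ evaluated at the appropriate points; then summing over $k_{1}$ with the $x$-kernel reproduces $A_{n_{1}}^{(p_{n_{1}},q_{n_{1}}),x}(B_{n_{2}}^{(p_{n_{2}},q_{n_{2}}),y}(f;x,y))$. Reversing the order of summation (Fubini for finite double sums) yields (ii). The only ``delicate'' point---really the main thing to get right---is bookkeeping the parameter dependence of $f(u_{k_{1}},v_{k_{2}})$ when $A_{n_{1}}^{(p_{n_{1}},q_{n_{1}}),x}$ and $B_{n_{2}}^{(p_{n_{2}},q_{n_{2}}),y}$ are successively applied, but since the nodes $u_{k_{1}}$ and $v_{k_{2}}$ depend on disjoint sets of indices, the composition is unambiguous and both iterations produce the same double sum.
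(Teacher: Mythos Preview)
Your proposal is correct and follows essentially the same approach as the paper: both arguments amount to writing out the factored double sum in (2.1) and regrouping it as an iterated sum, using only linearity and the trivial interchange of finite summations. The paper computes $A_{n_{1}}^{(p_{n_{1}},q_{n_{1}}),x}\bigl(B_{n_{2}}^{(p_{n_{2}},q_{n_{2}}),y}(f;x,y)\bigr)$ explicitly and matches it to (2.1), whereas you start from (2.1) and peel off the two univariate operators---but this is the same computation read in the opposite direction.
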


\begin{proof}

\begin{enumerate}
\item[$($i$)$] $A_{n_{1}}^{(p_{n_{1}},q_{n_{1}}),x}\left(
B_{n_{2}}^{(p_{n_{2}},q_{n_{2}}),y}(f;x,y)\right) $\newline
$=A_{n_{1}}^{(p_{n_{1}},q_{n_{1}}),x}\left( \frac{1}{\ell
_{n_{2}}^{(p_{n_{2}},q_{n_{2}})}(y)}\sum_{k_{2}=0}^{n_{2}}f\left( x,\frac{%
p_{n_{2}}^{n_{2}-k_{2}+1}[k_{2}]_{p_{n_{2}},q_{n_{2}}}}{%
[n_{2}-k_{2}+1]_{p_{n_{2}},q_{n_{2}}}q_{n_{2}}^{k_{2}}}\right) p_{n_{2}}^{%
\frac{(n_{2}-k_{2})(n_{2}-k_{2}-1)}{2}}q_{n_{2}}^{\frac{k_{2}(k_{2}-1)}{2}}%
\left[
\begin{array}{c}
n_{2} \\
k_{2}%
\end{array}%
\right] _{p_{n_{2}},q_{n_{2}}}y^{k_{2}}\right) $\newline
$=\frac{1}{\ell _{n_{2}}^{(p_{n_{2}},q_{n_{2}})}(y)}%
\sum_{k_{2}=0}^{n_{2}}A_{n_{1}}^{(p_{n_{1}},q_{n_{1}}),x}\left( f\left( x,%
\frac{p_{n_{2}}^{n_{2}-k_{2}+1}[k_{2}]_{p_{n_{2}},q_{n_{2}}}}{%
[n_{2}-k_{2}+1]_{p_{n_{2}},q_{n_{2}}}q_{n_{2}}^{k_{2}}}\right) \right)
p_{n_{2}}^{\frac{(n_{2}-k_{2})(n_{2}-k_{2}-1)}{2}}q_{n_{2}}^{\frac{%
k_{2}(k_{2}-1)}{2}}\left[
\begin{array}{c}
n_{2} \\
k_{2}%
\end{array}%
\right] _{p_{n_{2}},q_{n_{2}}}y^{k_{2}}$\newline

$=\frac{1}{\ell _{n_{2}}^{(p_{n_{2}},q_{n_{2}})}(y)}%
\sum_{k_{2}=0}^{n_{2}}p_{n_{2}}^{\frac{(n_{2}-k_{2})(n_{2}-k_{2}-1)}{2}%
}q_{n_{2}}^{\frac{k_{2}(k_{2}-1)}{2}}\left[
\begin{array}{c}
n_{2} \\
k_{2}%
\end{array}%
\right] _{p_{n_{2}},q_{n_{2}}}y^{k_{2}}$\newline

$\times \sum_{k_{1}=0}^{n_{1}}\frac{1}{\ell
_{n_{1}}^{(p_{n_{1}},q_{n_{1}})}(x)}f\left( \frac{%
p_{n_{1}}^{n_{1}-k_{1}+1}[k_{1}]_{p_{n_{1}},q_{n_{1}}}}{%
[n_{1}-k_{1}+1]_{p_{n_{1}},q_{n_{1}}}q_{n_{1}}^{k_{1}}},\frac{%
p_{n_{2}}^{n_{2}-k_{2}+1}[k_{2}]_{p_{n_{2}},q_{n_{2}}}}{%
[n_{2}-k_{2}+1]_{p_{n_{2}},q_{n_{2}}}q_{n_{2}}^{k_{2}}}\right) $\newline

$\times p_{n_1}^{\frac{(n_1-k_1)(n_1-k_1-1)}{2}}q_{n_1}^{\frac{k_1(k_1-1)}{2}%
} \left[
\begin{array}{c}
n_1 \\
k_1%
\end{array}%
\right] _{p_{n_1},q_{n_1}} x^{k_1} $\newline

$=\frac{1}{\ell^{(p_{n_1},q_{n_1})}_{n_1}(x)} \frac{1}{%
\ell^{(p_{n_2},q_{n_2})}_{n_2}(y)} \sum_{k_1=0}^{n_1}\sum_{k_2=0}^{n_2} f
\left( \frac{p_{n_1}^{n_1-k_1+1}[k_1]_{p_{n_1},q_{n_1}}}{%
[n_1-k_1+1]_{p_{n_1},q_{n_1}}q_{n_1}^{k_1} }, \frac{%
p_{n_2}^{n_2-k_2+1}[k_2]_{p_{n_2},q_{n_2}}}{%
[n_2-k_2+1]_{p_{n_2},q_{n_2}}q_{n_2}^{k_2} }\right)$\newline
$\times p_{n_1}^{\frac{(n_1-k_1)(n_1-k_1-1)}{2}}q_{n_1}^{\frac{k_1(k_1-1)}{2}%
} p_{n_2}^{\frac{(n_2-k_2)(n_2-k_2-1)}{2}}q_{n_2}^{\frac{k_2(k_2-1)}{2}} %
\left[
\begin{array}{c}
n_1 \\
k_1%
\end{array}%
\right] _{p_{n_1},q_{n_1}} \left[
\begin{array}{c}
n_2 \\
k_2%
\end{array}%
\right] _{p_{n_2},q_{n_2}} x^{k_1}y^{k_2} $\newline

$=L^{(p_{n_1},p_{n_2}),(q_{n_1},q_{n_2})}_{n_1,n_2}(f;x,y)$.\newline

\item[$($ii$)$] It can be proved in a similar way.
\end{enumerate}
\end{proof}

\section{Approximation properties of Bivariate operators}

In this section, we obtain the Korovkin's type approximation theorem for our
operators defined in (2.1).

Let $C_{\mathcal{B}}(\mathbb{R}_{+}^{2})$ be the set of all bounded and
continuous functions on $\mathbb{R}_{+}^{2}$ which is linear normed space
with
\begin{equation*}
\parallel f\parallel _{C_{B}(\mathbb{R}_{+}^{2})}=\sup_{x,y\geq 0}\mid
f(x,y)\mid .
\end{equation*}%
If
\begin{equation*}
\lim_{n,m\rightarrow \infty }\parallel f_{n,m}-f\parallel _{C_{\mathcal{B}}(%
\mathbb{R}_{+}^{2})}=0
\end{equation*}%
holds, then we say that the sequence $\{f_{n,m}\}$ converges uniformly to $f$
and it is written as $f_{n,m}\rightrightarrows f$.

Now, let us introduce modulus of continuity $\omega (\delta )$ which satisfy
the following conditions:

\begin{enumerate}
\item[$($i$)$] $\omega(\delta)$ is a non-negative increasing function for $%
\delta$ on $\mathbb{R}_+$

\item[$($ii$)$] $\omega(\delta_1+\delta_2)\leq
\omega(\delta_1)+\omega(\delta_2)$

\item[$($iii$)$] $\lim_{\delta \to 0}\omega(\delta)=0$.
\end{enumerate}

Let ${H}_{\omega }$ be the space of all real-valued functions $f$ satisfying
the condition
\begin{equation*}
\mid f(x)-f(y)\mid \leq \omega \left( \bigg{|}\frac{x}{1+x}-\frac{y}{1+y}%
\bigg{|}\right) ,
\end{equation*}%
for any $x,y\in \mathbb{R}_{+}.$\newline
Note that ${H}_{\omega }\subset C_{\mathcal{B}}(\mathbb{R}_{+})$ for the
bounded and continuous function $f$ on $\mathbb{R}_{+}.$

\begin{thm}[\protect\cite{butz}]
Let $\{A_n\}$ be the sequence of positive linear operators from $H_\omega(%
\mathbb{R}_+)$ into $C_B(\mathbb{R}_+)$, satisfying the conditions
\begin{equation*}
\lim_{n \to \infty} \parallel A_n \left( \left( \frac{t}{1+t}%
\right)^\nu;x\right)-\left(\frac{x}{1+x}\right)^\nu \parallel_{C_{B}},
\end{equation*}
for $\nu=0,1,2$. Then for any function $f \in H_\omega$
\begin{equation*}
\lim_{n \to \infty} \parallel A_n (f)-f \parallel_{C_{B}}=0.
\end{equation*}
\end{thm}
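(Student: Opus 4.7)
The plan is to mirror the classical Korovkin proof, with the map $\varphi(t)=t/(1+t)$ in place of the identity and the three test functions $e_{\nu}(t)=(t/(1+t))^{\nu}$, $\nu=0,1,2$, replacing $1,t,t^{2}$. The key pointwise estimate is the standard polynomial bound via the modulus: by monotonicity and subadditivity of $\omega $ (properties (i)--(ii)), for every $\delta >0$ and $x,t\in \mathbb{R}_{+}$,
\begin{equation*}
|f(t)-f(x)|\,\leq \,\omega \bigl(|\varphi(t)-\varphi(x)|\bigr)\,\leq \,\left(1+\frac{(\varphi(t)-\varphi(x))^{2}}{\delta ^{2}}\right)\omega (\delta ).
\end{equation*}

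Next I would apply the positive linear operator $A_{n}$ in the variable $t$ and combine linearity, positivity, and the triangle inequality to get
\begin{equation*}
|A_{n}(f;x)-f(x)|\,\leq \,|f(x)|\,|A_{n}(e_{0};x)-1|\,+\,\omega (\delta )\left(A_{n}(e_{0};x)+\frac{1}{\delta ^{2}}\,A_{n}\bigl((\varphi(t)-\varphi(x))^{2};x\bigr)\right).
\end{equation*}
The inner second moment expands as $A_{n}(e_{2};x)-2\varphi(x)A_{n}(e_{1};x)+\varphi(x)^{2}A_{n}(e_{0};x)$, and by the three hypothesized uniform limits for $\nu =0,1,2$ it tends to $e_{2}(x)-2\varphi(x)e_{1}(x)+\varphi(x)^{2}e_{0}(x)=0$ uniformly in $x$.

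To close the argument, I would set $\delta _{n}:=\bigl(\sup_{x\geq 0}A_{n}((\varphi(t)-\varphi(x))^{2};x)\bigr)^{1/2}$, which tends to $0$ by the previous step, and then substitute $\delta =\delta _{n}$. By property (iii) of $\omega $ we have $\omega (\delta _{n})\to 0$, the bracket that multiplies $\omega (\delta _{n})$ is uniformly bounded by a universal constant (namely $1+1 = 2$ up to the $o(1)$ correction from $A_n(e_0)\to 1$), and the remaining error $|f(x)|\,|A_{n}(e_{0};x)-1|$ tends to $0$ uniformly because $f$ is bounded (since $H_{\omega }\subset C_{B}$ as noted in the paper) and $A_{n}(e_{0})\to 1$ uniformly. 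The main point that really needs care is uniformity over the unbounded domain $\mathbb{R}_{+}$; this works precisely because $\varphi(x)\in [0,1)$ is a bounded function of $x$, so the three Korovkin hypotheses give genuinely uniform---not merely pointwise---control of the second moment, which in turn makes the choice $\delta =\delta _{n}$ legitimate.
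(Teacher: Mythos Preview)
Your argument is correct and is the standard Korovkin proof adapted to the test system $e_\nu(t)=(t/(1+t))^\nu$; the inequality $\omega(s)\le (1+s^2/\delta^2)\,\omega(\delta)$ is legitimate here since subadditivity and monotonicity give $\omega(s)\le (1+s/\delta)\,\omega(\delta)$, and for $s<\delta$ the bound holds trivially while for $s\ge\delta$ one has $s/\delta\le s^2/\delta^2$.

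However, there is nothing to compare against: the paper does \emph{not} prove this theorem. It is quoted from Gadjiev and \c{C}akar (reference \cite{butz}) and stated without proof as a known result; the paper then proceeds to prove only the bivariate analogue, Theorem~3.2. If it is of interest, the paper's proof of Theorem~3.2 uses the alternative ``$\epsilon$--$\delta$ split'' formulation of Korovkin rather than your modulus-of-continuity inequality: one writes $|f(u,v)-f(x,y)|\le \epsilon + \tfrac{2M}{\delta^2}\bigl[(\varphi(u)-\varphi(x))^2+(\psi(v)-\psi(y))^2\bigr]$ using uniform continuity on the near set and boundedness on the far set, and then applies the operator. The two approaches are equivalent in substance; yours has the minor advantage of giving a quantitative estimate in terms of $\omega(\delta_n)$ rather than an abstract $\epsilon$.
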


In order to get the convergence results for the operators $%
L_{n_{1},n_{2}}^{(p_{n_{1}},p_{n_{2}}),(q_{n_{1}},q_{n_{2}})}$, we take $%
p=p_{n_{1}},~~p=p_{n_{2}}$ and $q=q_{n_{1}},~~q=q_{n_{2}}$ where $%
q_{n_{i}}\in (0,1)$ and $p_{n_{i}}\in (q_{n_{i}},1]$ for $i=1,2$ satisfying,
\begin{equation*}
\lim_{n_{1},n_{2}\rightarrow \infty }p_{n_{1}},~~p_{n_{2}}\rightarrow 1,~~~~%
\mbox{and}~~~\lim_{n_{1},n_{2}\rightarrow \infty
}q_{n_{1}},~~q_{n_{2}}\rightarrow 1~~~~~~~~~~~~~~~~~~~~~\eqno(3.1)
\end{equation*}

Now we prove the bivariate case of Theorem 3.1.

\begin{thm}
Let $p=p_{n_1},~~p=p_{n_2}$ and $q=q_{n_1},~~~q=q_{n_2}$ satisfying (3.1),
for $0<q_{n_1}<p_{n_1}\leq 1,~~0<q_{n_2}<p_{n_2}\leq 1$ and if $%
A^{(p_{n_1},p_{n_2}),(q_{n_1},q_{n_2})}_{n_1,n_2}$ be the sequence of a
linear positive operator, defined as $%
A^{(p_{n_1},p_{n_2}),(q_{n_1},q_{n_2})}_{n_1,n_2} :H_\omega(\mathbb{R}_+^2)
\to C_B(\mathbb{R}_+^2)$ , satisfying the following conditions:

\begin{enumerate}
\item[$($i$)$] $\lim_{n_1,n_2 \to \infty}\parallel
A^{(p_{n_1},p_{n_2}),(q_{n_1},q_{n_2})}_{n_1,n_2}
(e_{00};x,y)-e_{00}\parallel_{C_{\mathbb{R}_+^2}}=0$,

\item[$($ii$)$] $\lim_{n_1,n_2 \to \infty}\parallel
A^{(p_{n_1},p_{n_2}),(q_{n_1},q_{n_2})}_{n_1,n_2}(e_{10};x,y)-e_{10}%
\parallel_{C_{\mathbb{R}_+^2}}=0$,

\item[$($iii$)$] $\lim_{n_1,n_2 \to \infty}\parallel
A^{(p_{n_1},p_{n_2}),(q_{n_1},q_{n_2})}_{n_1,n_2}(e_{01};x,y)-e_{01}%
\parallel_{C_{\mathbb{R}_+^2}}=0$,

\item[$($iv$)$] $\lim_{n_1,n_2 \to \infty}\parallel
A^{(p_{n_1},p_{n_2}),(q_{n_1},q_{n_2})}_{n_1,n_2}(e_{20};x,y)-e_{20}%
\parallel_{C_{\mathbb{R}_+^2}}=0$,

\item[$($v$)$] $\lim_{n_1,n_2 \to \infty}\parallel
A^{(p_{n_1},p_{n_2}),(q_{n_1},q_{n_2})}_{n_1,n_2}(e_{02};x,y)-e_{02}%
\parallel_{C_{\mathbb{R}_+^2}}=0.$
\end{enumerate}
Then for any function $f\in H_{\omega }(\mathbb{R}_{+}^{2})$
\begin{equation*}
\lim_{n_{1},n_{2}\rightarrow \infty }\parallel
A_{n_{1},n_{2}}^{(p_{n_{1}},p_{n_{2}}),(q_{n_{1}},q_{n_{2}})}(f;x,y)-f(x,y)%
\parallel _{C_{\mathbb{R}_{+}^{2}}}=0\eqno(3.2)
\end{equation*}%
holds. Here $e_{ij}:\mathbb{R}_{+}^{2}\rightarrow \lbrack 0,1)$ defined in
(1.7) are two dimensional test functions.
\end{thm}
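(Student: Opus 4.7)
The plan is to run the standard Korovkin argument in two variables, using linearity and positivity of $A := A_{n_1,n_2}^{(p_{n_1},p_{n_2}),(q_{n_1},q_{n_2})}$ together with the five hypotheses and an $H_\omega$--type estimate in place of uniform continuity. Fix $(x,y)\in\mathbb{R}_+^2$ and a threshold $\delta>0$. By the bivariate analogue of the $H_\omega$ condition (extending the univariate definition recalled before Theorem~3.1),
$$|f(t,s)-f(x,y)|\leq \omega\left(\left|\tfrac{t}{1+t}-\tfrac{x}{1+x}\right|+\left|\tfrac{s}{1+s}-\tfrac{y}{1+y}\right|\right),$$
and subadditivity of $\omega$ yields $\omega(u)\leq(1+u/\delta)\omega(\delta)$. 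Thus
$$|f(t,s)-f(x,y)|\leq \omega(\delta)\left(1+\tfrac{1}{\delta}\left|\tfrac{t}{1+t}-\tfrac{x}{1+x}\right|+\tfrac{1}{\delta}\left|\tfrac{s}{1+s}-\tfrac{y}{1+y}\right|\right).$$

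Applying $A$, using $A(e_{00};x,y)\leq 1+|A(e_{00};x,y)-1|$ and the triangle inequality, I split
$$|A(f;x,y)-f(x,y)|\leq A\left(|f(t,s)-f(x,y)|;x,y\right)+|f(x,y)|\cdot|A(e_{00};x,y)-1|.$$
The second summand vanishes uniformly as $n_1,n_2\to\infty$ by hypothesis (i), since $f\in H_\omega\subset C_{\mathcal{B}}$ is bounded. For the first summand, the algebraic identity
$$\left(\tfrac{t}{1+t}-\tfrac{x}{1+x}\right)^{2}=e_{20}(t,s)-2\tfrac{x}{1+x}\,e_{10}(t,s)+\left(\tfrac{x}{1+x}\right)^{2}e_{00}(t,s),$$
together with the fact that $\tfrac{x}{1+x}\in[0,1)$, reduces the $L^{2}$--moment in the $t$--direction to a linear combination of $A(e_{20};x,y)-e_{20}$, $A(e_{10};x,y)-e_{10}$ and $A(e_{00};x,y)-e_{00}$ with bounded coefficients, which tends to $0$ uniformly by hypotheses (i), (ii) and (iv); an identical argument in the $s$--variable uses (i), (iii) and (v). Cauchy--Schwarz with respect to the positive linear functional $A(\cdot;x,y)$ then converts these $L^{2}$--bounds into uniform convergence of the corresponding $L^{1}$--moments $A(|\tfrac{t}{1+t}-\tfrac{x}{1+x}|;x,y)$ and $A(|\tfrac{s}{1+s}-\tfrac{y}{1+y}|;x,y)$ to $0$.

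Taking the supremum over $(x,y)\in\mathbb{R}_+^2$ and letting $n_1,n_2\to\infty$, the bound reduces to $\omega(\delta)$ times a quantity that stays bounded; letting $\delta\to 0$ using property (iii) of $\omega$ (i.e. $\omega(\delta)\to 0$) finishes the proof of (3.2). The main obstacle is producing genuinely \emph{uniform} (in $(x,y)$) control of the moment expressions; fortunately, because everything is phrased in terms of $\tfrac{x}{1+x}$ and $\tfrac{y}{1+y}$, which live in the bounded interval $[0,1)$, the coefficients appearing in the identity above are automatically bounded by $1$, and the convergence is inherited directly from hypotheses (i)--(v). The remaining steps are routine algebraic manipulations that parallel the one-dimensional Korovkin scheme of Theorem~3.1.
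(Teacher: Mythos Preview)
Your argument is correct and is a legitimate Korovkin-type proof, but it is organized a bit differently from the paper's. The paper does not use the inequality $\omega(u)\le(1+u/\delta)\omega(\delta)$ or Cauchy--Schwarz at all; instead it fixes $\epsilon>0$, chooses $\delta$ so that the $H_\omega$ condition gives $|f(u,v)-f(x,y)|<\epsilon$ on the ``near'' set, and on the ``far'' set uses only boundedness of $f$ to get the crude estimate $|f(u,v)-f(x,y)|\le \tfrac{2M}{\delta^{2}}\big[(\tfrac{u}{1+u}-\tfrac{x}{1+x})^{2}+(\tfrac{v}{1+v}-\tfrac{y}{1+y})^{2}\big]$, so that the squared moments appear directly without any $L^1\to L^2$ conversion. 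Your route exploits the modulus more sharply (subadditivity and monotonicity of $\omega$) and is marginally more informative, since it could in principle yield a rate in terms of $\omega$; the paper's version is shorter and uses only that $f$ is bounded. Either way the crux is identical: expand the second moments via $e_{00},e_{10},e_{01},e_{20},e_{02}$ with coefficients bounded because $\tfrac{x}{1+x},\tfrac{y}{1+y}\in[0,1)$, and invoke (i)--(v). One small wording issue: after sending $n_1,n_2\to\infty$ your bracketed factor does not merely ``stay bounded'' but actually tends to $1$, which is what makes the final $\delta\to0$ step clean.
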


\begin{proof}
For the bivariate case on ${H}_{\omega }(\mathbb{R}_{+}^{2})$, we have
\begin{equation*}
\mid f(u,v)-f(x,y)\mid \leq \omega \left( \bigg{|}\left( \frac{u}{1+u},\frac{%
v}{1+v}\right) -\left( \frac{x}{1+x},\frac{y}{1+y}\right) \bigg{|}\right) .
\end{equation*}%
If $f\in {H}_{\omega }(\mathbb{R}_{+}^{2})$, then for any $\epsilon >0$,
there exits $\delta >0$ such that
\begin{equation*}
\mid f(u,v)-f(x,y)\mid <\epsilon .
\end{equation*}%
Also if
\begin{equation*}
\sqrt{\left( \frac{u}{1+u}-\frac{x}{1+x}\right) ^{2}+\left( \frac{v}{1+v}-%
\frac{y}{1+y}\right) ^{2}}<\delta
\end{equation*}%
boundedness of $f$ implies that there exits a positive constant ${M}$ such
that
\begin{equation*}
\mid f(u,v)-f(x,y)\mid \leq \frac{2{M}}{\delta ^{2}}\left[ \left( \frac{u}{%
1+u}-\frac{x}{1+x}\right) ^{2}+\left( \frac{v}{1+v}-\frac{y}{1+y}\right) ^{2}%
\right] ,
\end{equation*}%
if $\sqrt{\left( \frac{u}{1+u}-\frac{x}{1+x}\right) ^{2}+\left( \frac{v}{1+v}%
-\frac{y}{1+y}\right) ^{2}}\geq \delta .$

Therefore, for all $(u,v),~~(x,y)\in \mathbb{R}_{+}^{2}$, we have
\begin{equation*}
\mid f(u,v)-f(x,y)\mid \leq \epsilon +\frac{2{M}}{\delta ^{2}}\left[ \left(
\frac{u}{1+u}-\frac{x}{1+x}\right) ^{2}+\left( \frac{v}{1+v}-\frac{y}{1+y}%
\right) ^{2}\right] .
\end{equation*}%
Now applying the operator $%
A_{n_{1},n_{2}}^{(p_{n_{1}},p_{n_{2}}),(q_{n_{1}},q_{n_{2}})}$ to the above
inequality, we get\newline
\newline
$\mid
A_{n_{1},n_{2}}^{(p_{n_{1}},p_{n_{2}}),(q_{n_{1}},q_{n_{2}})}(f;x,y)-f\mid
_{C_{\mathbb{R}_{+}^{2}}}$\newline
$\leq (\epsilon +{M})\mid
A_{n_{1},n_{2}}^{(p_{n_{1}},p_{n_{2}}),(q_{n_{1}},q_{n_{2}})}(e_{00};x,y)-e_{00}\mid +\epsilon
$\newline
$+\frac{2{M}}{\delta ^{2}}\mid
A_{n_{1},n_{2}}^{(p_{n_{1}},p_{n_{2}}),(q_{n_{1}},q_{n_{2}})}(e_{20}+e_{02};x,y)-(e_{20}+e_{02})\mid
$\newline
$+\frac{4{M}}{\delta ^{2}}\mid
A_{n_{1},n_{2}}^{(p_{n_{1}},p_{n_{2}}),(q_{n_{1}},q_{n_{2}})}(e_{10};x,y)-e_{10}\mid +%
\frac{4{M}}{\delta ^{2}}\mid
A_{n_{1},n_{2}}^{(p_{n_{1}},p_{n_{2}}),(q_{n_{1}},q_{n_{2}})}(e_{01};x,y)-e_{01}\mid .
$\newline
By using the conditions (i)-(v) we get the result.
\end{proof}

\begin{thm}
Let $p=p_{n_{1}},~~p=p_{n_{2}}$ and $q=q_{n_{1}},~~~q=q_{n_{2}}$ satisfying
(3.1), for $0<q_{n_{1}}<p_{n_{1}}\leq 1,~~0<q_{n_{2}}<p_{n_{2}}\leq 1$ and
if $L_{n_{1},n_{2}}^{(p_{n_{1}},p_{n_{2}}),(q_{n_{1}},q_{n_{2}})}$ be the
sequence of a linear positive operator defined in (2.1) such that $%
L_{n_{1},n_{2}}^{(p_{n_{1}},p_{n_{2}}),(q_{n_{1}},q_{n_{2}})}:H_{\omega }(%
\mathbb{R}_{+}^{2})\rightarrow C_{B}(\mathbb{R}_{+}^{2})$ , satisfying the
conditions of Theorem 3.2. Then for any $f\in H_{\omega }(\mathbb{R}_{+}^{2})
$, $L_{n_{1},n_{2}}^{(p_{n_{1}},p_{n_{2}}),(q_{n_{1}},q_{n_{2}})}$ converges
uniformly to $f$. That is $\forall f\in H_{\omega }(\mathbb{R}_{+}^{2})$, we
have
\begin{equation*}
\lim_{n_{1},n_{2}\rightarrow \infty }\parallel
L_{n_{1},n_{2}}^{(p_{n_{1}},p_{n_{2}}),(q_{n_{1}},q_{n_{2}})}(f;x,y)-f(x,y)%
\parallel _{C_{\mathbb{R}_{+}^{2}}}=0.
\end{equation*}
\end{thm}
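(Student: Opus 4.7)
The strategy is to reduce Theorem 3.3 to Theorem 3.2 by verifying that the sequence $L_{n_{1},n_{2}}^{(p_{n_{1}},p_{n_{2}}),(q_{n_{1}},q_{n_{2}})}$, already known from Section 2 to be linear and positive and to map $H_{\omega}(\mathbb{R}_{+}^{2})$ into $C_{B}(\mathbb{R}_{+}^{2})$, satisfies the five hypotheses (i)--(v) of Theorem 3.2. Once this is established, Theorem 3.2 delivers the desired uniform convergence $L_{n_{1},n_{2}}^{(p_{n_{1}},p_{n_{2}}),(q_{n_{1}},q_{n_{2}})}(f;\cdot,\cdot)\to f$ for every $f\in H_{\omega}(\mathbb{R}_{+}^{2})$.

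The verification of (i)--(v) is then a direct computation using the explicit identities of Lemma 2.1. For (i), Lemma 2.1(i) gives $L_{n_{1},n_{2}}^{(p_{n_{1}},p_{n_{2}}),(q_{n_{1}},q_{n_{2}})}(e_{00};x,y)=1=e_{00}(x,y)$ identically, so the norm is $0$. For (ii), Lemma 2.1(ii) gives
\begin{equation*}
L_{n_{1},n_{2}}^{(p_{n_{1}},p_{n_{2}}),(q_{n_{1}},q_{n_{2}})}(e_{10};x,y)-e_{10}(x,y)=\left(\frac{[n_{1}]_{p_{n_{1}},q_{n_{1}}}}{[n_{1}+1]_{p_{n_{1}},q_{n_{1}}}}-1\right)\frac{x}{1+x},
\end{equation*}
and since $\frac{x}{1+x}\in[0,1)$ uniformly in $x\geq 0$, the $C_{\mathbb{R}_{+}^{2}}$-norm is controlled by $\left|1-\frac{[n_{1}]_{p_{n_{1}},q_{n_{1}}}}{[n_{1}+1]_{p_{n_{1}},q_{n_{1}}}}\right|$, which tends to $0$ by (3.1). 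Condition (iii) is entirely symmetric.

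For (iv) I would split the difference into the two Lemma 2.1(iv) summands, writing
\begin{equation*}
L_{n_{1},n_{2}}^{(p_{n_{1}},p_{n_{2}}),(q_{n_{1}},q_{n_{2}})}(e_{20};x,y)-e_{20}(x,y)=\alpha_{n_{1}}\frac{x^{2}}{(1+x)(p_{n_{1}}+q_{n_{1}}x)}+\beta_{n_{1}}\frac{x}{1+x}-\left(\frac{x}{1+x}\right)^{2},
\end{equation*}
with $\alpha_{n_{1}}=\frac{p_{n_{1}}q_{n_{1}}^{2}[n_{1}]_{p_{n_{1}},q_{n_{1}}}[n_{1}-1]_{p_{n_{1}},q_{n_{1}}}}{[n_{1}+1]_{p_{n_{1}},q_{n_{1}}}^{2}}$ and $\beta_{n_{1}}=\frac{p_{n_{1}}^{n_{1}+1}[n_{1}]_{p_{n_{1}},q_{n_{1}}}}{[n_{1}+1]_{p_{n_{1}},q_{n_{1}}}^{2}}$, and then rewriting the first quotient as $\left(\frac{x}{1+x}\right)^{2}\cdot\frac{1+x}{p_{n_{1}}+q_{n_{1}}x}$. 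The scalars $\alpha_{n_{1}}\to 1$ and $\beta_{n_{1}}\to 0$ by (3.1), and the ratio $\frac{1+x}{p_{n_{1}}+q_{n_{1}}x}=\frac{(p_{n_{1}}+q_{n_{1}}x)+(1-p_{n_{1}})+(1-q_{n_{1}})x}{p_{n_{1}}+q_{n_{1}}x}$ converges to $1$ uniformly in $x\geq 0$ since its deviation from $1$ is bounded by $\frac{1-p_{n_{1}}}{p_{n_{1}}}+\frac{1-q_{n_{1}}}{q_{n_{1}}}$; combined with $\frac{x}{1+x}\in[0,1)$, this yields uniform convergence of (iv) to zero. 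Claim (v) is symmetric. The main obstacle I foresee is exactly this uniform handling of $x\in[0,\infty)$ in (iv)--(v): a careless estimate could fail as $x\to\infty$, and the trick is to keep every occurrence of $x$ absorbed into the bounded factor $\frac{x}{1+x}$ before estimating the remaining $(p,q)$-dependent coefficients.
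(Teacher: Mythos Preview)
Your proposal is correct and follows the paper's own route: reduce to the bivariate Korovkin theorem (Theorem~3.2) and verify its five test-function hypotheses using the closed forms of Lemma~2.1 together with condition~(3.1). The only difference is cosmetic: for $e_{20}$ the paper expands the scalar ratios $\frac{[n_{1}][n_{1}-1]}{[n_{1}+1]^{2}}$ and $\frac{[n_{1}]}{[n_{1}+1]^{2}}$ via the identity $q[n]=[n+1]-p^{n}$ before bounding, whereas you argue directly that $\alpha_{n_{1}}\to 1$, $\beta_{n_{1}}\to 0$ and give an explicit uniform bound $\bigl|\frac{1+x}{p_{n_{1}}+q_{n_{1}}x}-1\bigr|\le \frac{1-p_{n_{1}}}{p_{n_{1}}}+\frac{1-q_{n_{1}}}{q_{n_{1}}}$---your handling of the $x$-uniformity here is in fact cleaner than the paper's, which leaves that step somewhat implicit.
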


\begin{proof}
Using Theorem 3.1 we see that it is sufficient to verify the following
conditions:
\begin{equation*}
\lim_{n_{1},n_{2}\rightarrow \infty }\parallel
L_{n_{1},n_{2}}^{(p_{n_{1}},p_{n_{2}}),(q_{n_{1}},q_{n_{2}})}(e_{ij};x,y)-e_{ij}\parallel _{C_{%
\mathbb{R}_{+}^{2}}}=0,~~i,j=0,1,2,~~~~~~~~~~~~~~~~~~~~~~~~~~\eqno(3.3)
\end{equation*}%
here $e_{ij}$ are the test functions defined in (1.7). From Lemma 2.1, the
first condition of (3.3) is fulfilled for $i,j=0$. Now it is easy to see
that from (ii) of Lemma 2.1 \newline
\begin{eqnarray*}
\lim_{n_{1},n_{2}\rightarrow \infty }\parallel
L_{n_{1},n_{2}}^{(p_{n_{1}},p_{n_{2}}),(q_{n_{1}},q_{n_{2}})}(e_{10};x,y)-e_{10}\parallel _{C_{%
\mathbb{R}_{+}^{2}}} &=&\sup_{x,y\geq 0}\bigg{|}\frac{%
p_{n_{1}}[n_{1}]_{p_{n_{1}},q_{n_{1}}}}{[n_{1}+1]_{p_{n_{1}},q_{n_{1}}}}%
\frac{x}{1+x}-\frac{x}{1+x}\bigg{|} \\
&\leq &\bigg{|}\frac{p_{n_{1}}[n_{1}]_{p_{n_{1}},q_{n_{1}}}}{%
[n_{1}+1]_{p_{n_{1}},q_{n_{1}}}}-1\bigg{|} \\
&\leq &\bigg{|}\left( \frac{p_{n_{1}}}{q_{n_{1}}}\right) \left(
1-p_{n_{1}}^{n_{1}}\frac{1}{[n_{1}+1]_{p_{n_{1}},q_{n_{1}}}}\right) -1%
\bigg{|}
\end{eqnarray*}%
since $%
q_{n_{1}}[n_{1}]_{p_{n_{1}},q_{n_{1}}}=[n_{1}+1]_{p_{n_{1}},q_{n_{1}}}-p_{n_{1}}^{n_{1}},~~[n_{1}+1]_{p_{n_{1}},q_{n_{1}}}\rightarrow \infty
$ as $n_{1}\rightarrow \infty $, the condition (3.3) holds for $i=1,~~j=0$.%
\newline
Similarly for $i=0,~~j=1$ it can be shown easily.

To verify this condition for $i=2,~~j=0$, consider (iv) of Lemma 2.1. Then
we see that\newline
$\lim_{n_{1}\rightarrow \infty }\parallel
L_{n_{1},n_{2}}^{(p_{n_{1}},p_{n_{2}}),(q_{n_{1}},q_{n_{2}})}(e_{20};x,y)-e_{20}\parallel _{C_{%
\mathbb{R}_{+}^{2}}}$\newline
$=\sup_{x\geq 0}\left\{ \frac{x^{2}}{(1+x)^{2}}\left( \frac{%
p_{n_{1}}q_{n_{1}}^{2}[n_{1}]_{p_{n_{1}},q_{n_{1}}}[n_{1}-1]_{p_{n_{1}},q_{n_{1}}}%
}{[n_{1}+1]_{p_{n_{1}},q_{n_{1}}}^{2}}.\frac{1+x}{p_{n_{1}}+q_{n_{1}}x}%
-1\right) +\frac{p_{n_{1}}^{n_{1}+1}[n_{1}]_{p_{n_{1}},q_{n_{1}}}}{%
[n_{1}+1]_{p_{n_{1}},q_{n_{1}}}^{2}}.\frac{x}{1+x}\right\} $.\newline
A small calculation leads to
\begin{equation*}
\frac{\lbrack n_{1}]_{p_{n_{1}},q_{n_{1}}}[n_{1}-1]_{p_{n_{1}},q_{n_{1}}}}{%
[n_{1}+1]_{p_{n_{1}},q_{n_{1}}}^{2}}=\frac{1}{q_{n_{1}}^{3}}\left\{
1-p_{n_{1}}^{n_{1}}\left( 2+\frac{q_{n_{1}}}{p_{n_{1}}}\right) \frac{1}{%
[n_{1}+1]_{p_{n_{1}},q_{n_{1}}}}+(p_{n_{1}}^{n_{1}})^{2}\left( 1+\frac{%
q_{n_{1}}}{p_{n_{1}}}\right) \frac{1}{[n_{1}+1]_{p_{n_{1}},q_{n_{1}}}^{2}}%
\right\} ,
\end{equation*}%
and
\begin{equation*}
\frac{\lbrack n_{1}]_{p_{n_{1}},q_{n_{1}}}}{%
[n_{1}+1]_{p_{n_{1}},q_{n_{1}}}^{2}}=\frac{1}{q_{n_{1}}}\left( \frac{1}{%
[n_{1}+1]_{p_{n_{1}},q_{n_{1}}}}-p_{n_{1}}^{n_{1}}\frac{1}{%
[n_{1}+1]_{p_{n_{1}},q_{n_{1}}}^{2}}\right) .
\end{equation*}

Thus we have\newline
\newline
$\lim_{n_1 \to \infty}\parallel
L^{(p_{n_1},p_{n_2}),(q_{n_1},q_{n_2})}_{n_1,n_2}
(e_{20};x,y)-e_{20}\parallel_{C_{\mathbb{R}_+^2}}$\newline

$\leq \frac{p_{n_{1}}}{q_{n_{1}}}\left\{ 1-p_{n_{1}}^{n_{1}}\left( 2+\frac{%
q_{n_{1}}}{p_{n_{1}}}\right) \frac{1}{[n_{1}+1]_{p_{n_{1}},q_{n_{1}}}}%
+(p_{n_{1}}^{n_{1}})^{2}\left( 1+\frac{q_{n_{1}}}{p_{n_{1}}}\right) \frac{1}{%
[n_{1}+1]_{p_{n_{1}},q_{n_{1}}}^{2}}-1\right\} $\newline
$+p_{n_{1}}^{n_{1}}.\frac{p_{n_{1}}}{q_{n_{1}}}\left( \frac{1}{%
[n_{1}+1]_{p_{n_{1}},q_{n_{1}}}}-p_{n_{1}}^{n_{1}}\frac{1}{%
[n_{1}+1]_{p_{n_{1}},q_{n_{1}}}^{2}}\right) .$\newline
This implies that the condition (3.3) holds for also $i=2,~~~j=0$ and also
in similar way for $i=0,~~~j=2$ the proof is true. This completes the proof.
\end{proof}

\section{Rate of Convergence}

In this section, rate of convergence of operators (2.1) by means of modulus
of continuity of some bivariate modulus of smoothness functions are
introduced.\newline
The modulus of continuity for bivariate case is defined as follows. For $%
f\in H_{\omega }(\mathbb{R}_{+}^{2})$ is defined by\newline
$\widetilde{\omega }(f;\delta _{1},\delta _{2})=\sup_{u,x\geq 0}\left\{ %
\bigg{|}f(u,v)-f(x,y)\bigg{|};~~\bigg{|}\frac{u}{1+u}-\frac{x}{1+x}\bigg{|}%
\leq \delta _{1},\bigg{|}\frac{v}{1+v}-\frac{y}{1+y}\bigg{|}\leq \delta
_{2},~~(u,v)\in \mathbb{R}_{+}^{2},~~(x,y)\in \mathbb{R}_{+}^{2}\right\} .$%
\newline
where for all $f\in H_{\omega }(\mathbb{R}_{+})$ $\widetilde{\omega }%
(f;\delta _{1},\delta _{2})$ satisfies the following conditions:

\begin{enumerate}
\item[$($i$)$] $\lim_{\delta_1,\delta_2 \to 0}\widetilde{\omega}(f;
\delta_1,\delta_2)\to 0$

\item[$($ii$)$] $\mid f(u,v)-f(x,y) \mid \leq \widetilde{\omega}(f;
\delta_1,\delta_2) \left(\frac{\mid \frac{u}{1+u}-\frac{x}{1+x}\mid}{\delta_1%
}+1 \right)\left(\frac{\mid \frac{v}{1+v}-\frac{y}{1+y}\mid}{\delta_2}+1
\right).$
\end{enumerate}

\begin{thm}
Let $p=p_{n_1},~~p=p_{n_2}$ and $q=q_{n_1},~~~q=q_{n_2}$ satisfying (3.1),
for $0<q_{n_1}<p_{n_1}\leq 1,~~0<q_{n_2}<p_{n_2}\leq 1$ and if $%
L^{(p_{n_1},p_{n_2}),(q_{n_1},q_{n_2})}_{n_1,n_2}$ be the sequence of a
linear positive operator defined in (2.1). Then for all $f \in H_\omega(%
\mathbb{R}_+^2)$ and $x,y \geq 0$, we have

\begin{equation*}
\mid L^{(p_{n_1},p_{n_2}),(q_{n_1},q_{n_2})}_{n_1,n_2} (f;x,y)-f(x,y)\mid
\leq 4 \omega (f;\delta_{n_1}(x), \delta_{n_2}(y))~~~~~~~~~~~~~~~~\eqno(4.1)
\end{equation*}
where
\begin{equation*}
\delta_{n_1}(x)=\frac{x^2}{(1+x)^2}\left( \frac{%
p_{n_1}q_{n_1}^2[n_1]_{p_{n_1},q_{n_1}}[n_1-1]_{p_{n_1},q_{n_1}}}{%
[n_1+1]_{p_{n_1},q_{n_1}}^2} \frac{1+x}{p_{n_1}+q_{n_1}x} -2\frac{%
p_{n_1}[n_1]_{p_{n_1},q_{n_1}}}{[n_1+1]_{p_{n_1},q_{n_1}}}+1\right) +\frac{%
p_{n_1}^{n_1+1}[n_1]_{p_{n_1},q_{n_1}}}{[n_1+1]_{p_{n_1},q_{n_1}}^2}\frac{x}{%
1+x}
\end{equation*}
\begin{equation*}
\delta_{n_2}(y)=\frac{y^2}{(1+y)^2}\left( \frac{%
p_{n_2}q_{n_2}^2[n_2]_{p_{n_2},q_{n_2}}[n_2-1]_{p_{n_2},q_{n_2}}}{%
[n_2+1]_{p_{n_2},q_{n_2}}^2} \frac{1+y}{p_{n_2}+q_{n_2}y} -2\frac{%
p_{n_2}[n_2]_{p_{n_2},q_{n_2}}}{[n_2+1]_{p_{n_2},q_{n_2}}}+1\right) +\frac{%
p_{n_2}^{n_2+1}[n_2]_{p_{n_2},q_{n_2}}}{[n_2+1]_{p_{n_2},q_{n_2}}^2}\frac{y}{%
1+y}.
\end{equation*}
\end{thm}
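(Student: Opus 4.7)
The plan is to mimic the classical proof of Shisha--Mond type rate of convergence theorems, but in the bivariate setting, using property (ii) of $\widetilde{\omega}$ and the two-dimensional analogue of the Cauchy--Schwarz inequality for the positive linear functional $L_{n_1,n_2}^{(p_{n_1},p_{n_2}),(q_{n_1},q_{n_2})}(\cdot\,;x,y)$ (which is valid because, by (i) of Lemma~2.1, the operator is a convex combination, i.e.\ it is the expectation with respect to a probability measure on the index pair $(k_1,k_2)$).

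First I would fix $(x,y)\in\mathbb{R}_+^2$ and any $\delta_1,\delta_2>0$, start from property (ii) of $\widetilde{\omega}$, apply $L_{n_1,n_2}^{(p_{n_1},p_{n_2}),(q_{n_1},q_{n_2})}$ to both sides in the variables $(u,v)$, and use the fact that the operator preserves constants together with linearity and positivity. Expanding
\[
\Bigl(\tfrac{\left|\tfrac{u}{1+u}-\tfrac{x}{1+x}\right|}{\delta_1}+1\Bigr)\Bigl(\tfrac{\left|\tfrac{v}{1+v}-\tfrac{y}{1+y}\right|}{\delta_2}+1\Bigr)
\]
yields four summands, whose images under the operator I would estimate separately. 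The summand $1$ gives $1$ by (i) of Lemma~2.1. For the two linear summands I would use Cauchy--Schwarz in the form $L(|\varphi|) \le \sqrt{L(\varphi^2)\,L(1)} = \sqrt{L(\varphi^2)}$, and for the cross product summand I would use the bilinear version $L(|\varphi_1 \varphi_2|)\le \sqrt{L(\varphi_1^2)\,L(\varphi_2^2)}$.

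The computational core is the evaluation of the two second moments
\[
\Phi_{n_1}(x) := L_{n_1,n_2}^{(p_{n_1},p_{n_2}),(q_{n_1},q_{n_2})}\!\left(\Bigl(\tfrac{u}{1+u}-\tfrac{x}{1+x}\Bigr)^{\!2};x,y\right),\qquad \Phi_{n_2}(y) := L_{n_1,n_2}^{(p_{n_1},p_{n_2}),(q_{n_1},q_{n_2})}\!\left(\Bigl(\tfrac{v}{1+v}-\tfrac{y}{1+y}\Bigr)^{\!2};x,y\right).
\]
Expanding the square and using linearity, $\Phi_{n_1}(x)$ becomes
\[
L(e_{20};x,y)\;-\;2\,\tfrac{x}{1+x}\,L(e_{10};x,y)\;+\;\Bigl(\tfrac{x}{1+x}\Bigr)^{\!2} L(e_{00};x,y),
\]
and substituting the values from parts (i), (ii), (iv) of Lemma~2.1 and factoring out $x^2/(1+x)^2$ from the appropriate terms produces exactly the expression defining $\delta_{n_1}(x)$ in the statement; $\Phi_{n_2}(y)=\delta_{n_2}(y)$ is obtained symmetrically from (i), (iii), (v) of Lemma~2.1.

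To finish, I would choose $\delta_1=\sqrt{\delta_{n_1}(x)}$ and $\delta_2=\sqrt{\delta_{n_2}(y)}$, which makes each of the Cauchy--Schwarz bounds equal to $1$, so the total contribution of the four summands is at most $4$, and (4.1) follows. The only delicate point is the cross term, where one must confirm that the bilinear Cauchy--Schwarz really does apply to our bivariate operator; this is immediate once one observes that the coefficients in (2.1) together with Lemma~2.1(i) present the operator as integration against a discrete product probability measure indexed by $(k_1,k_2)$. Everything else is routine bookkeeping of the quantities already computed in Lemma~2.1.
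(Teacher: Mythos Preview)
Your proposal is correct and follows essentially the same route as the paper's proof: start from property~(ii) of the bivariate modulus, apply the positive linear operator, control the absolute-value terms by Cauchy--Schwarz together with $L(e_{00};x,y)=1$, and identify the resulting second central moments with $\delta_{n_1}(x)$, $\delta_{n_2}(y)$ via Lemma~2.1. The only cosmetic difference is that the paper keeps the two factors $\bigl(1+\tfrac{1}{\delta_{n_1}}L(|\cdot|)\bigr)\bigl(1+\tfrac{1}{\delta_{n_2}}L(|\cdot|)\bigr)$ in product form (implicitly using the tensor-product structure of Lemma~2.2) and applies Cauchy--Schwarz inside each factor, whereas you expand into four summands and invoke the bilinear Cauchy--Schwarz for the cross term; both arrive at the same bound $4\,\widetilde{\omega}(f;\delta_{n_1},\delta_{n_2})$ after the choice $\delta_i=\sqrt{\delta_{n_i}}$.
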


\begin{proof}
\begin{eqnarray*}
\mid L^{(p_{n_1},p_{n_2}),(q_{n_1},q_{n_2})}_{n_1,n_2} (f;x,y)-f(x,y) \mid
&\leq & L^{(p_{n_1},p_{n_2}),(q_{n_1},q_{n_2})}_{n_1,n_2}\left(\mid
f(u,v)-f(x,y) \mid;x,y \right) \\
&\leq & \widetilde{\omega}(f; \delta_{n_1}, \delta_{n_2}) \left\{1+\frac{1}{%
\delta_{n_1}} L^{(p_{n_1},p_{n_2}),(q_{n_1},q_{n_2})}_{n_1,n_2} \left( %
\bigg{|} \frac{u}{1+u}-\frac{x}{1+x}\big{|};x,y \right)\right\} \\
&\times & \left\{1+\frac{1}{\delta_{n_2}}
L^{(p_{n_1},p_{n_2}),(q_{n_1},q_{n_2})}_{n_1,n_2} \left( \bigg{|} \frac{v}{%
1+v}-\frac{y}{1+y}\big{|};x,y \right)\right\}.
\end{eqnarray*}
Now by using the Cauchy-Schwarz inequality, we have\newline
\newline
$\mid L^{(p_{n_1},p_{n_2}),(q_{n_1},q_{n_2})}_{n_1,n_2} (f;x,y)-f(x,y) \mid$
\begin{eqnarray*}
&\leq & \widetilde{\omega}(f; \delta_{n_1}, \delta_{n_2}) \left\{1+\frac{1}{%
\delta_{n_1}} \left[ \left(
L^{(p_{n_1},p_{n_2}),(q_{n_1},q_{n_2})}_{n_1,n_2} \left(\frac{u}{1+u}-\frac{x%
}{1+x}\right)^2;x,y \right) \right]^{\frac{1}{2}} \left(
L^{(p_{n_1},p_{n_2}),(q_{n_1},q_{n_2})}_{n_1,n_2}(e_{00};x,y)\right)^{\frac{1%
}{2}}\right\} \\
&\times & \widetilde{\omega}(f; \delta_{n_1}, \delta_{n_2}) \left\{1+\frac{1%
}{\delta_{n_2}} \left[ \left(
L^{(p_{n_1},p_{n_2}),(q_{n_1},q_{n_2})}_{n_1,n_2} \left(\frac{v}{1+v}-\frac{y%
}{1+y}\right)^2;x,y \right) \right]^{\frac{1}{2}} \left(
L^{(p_{n_1},p_{n_2}),(q_{n_1},q_{n_2})}_{n_1,n_2}(e_{00};x,y)\right)^{\frac{1%
}{2}}\right\}
\end{eqnarray*}
$\leq \widetilde{\omega}(f; \delta_{n_1}, \delta_{n_2}) \left\{ 1+ \frac{1}{%
\delta_{n_1}} \left[ \frac{x^2}{(1+x)^2}\left( \frac{%
p_{n_1}q_{n_1}^2[n_1]_{p_{n_1},q_{n_1}}[n_1-1]_{p_{n_1},q_{n_1}}}{%
[n_1+1]_{p_{n_1},q_{n_1}}^2} \frac{1+x}{p_{n_1}+q_{n_1}x} -2\frac{%
p_{n_1}[n_1]_{p_{n_1},q_{n_1}}}{[n_1+1]_{p_{n_1},q_{n_1}}}+1\right) +\frac{%
p_{n_1}^{n_1+1}[n_1]_{p_{n_1},q_{n_1}}}{[n_1+1]_{p_{n_1},q_{n_1}}^2}\frac{x}{%
1+x}\right]^{\frac{1}{2}}\right\}$\newline
$\times \left\{ 1+ \frac{1}{\delta_{n_2}} \left[ \frac{y^2}{(1+y)^2}\left(
\frac{p_{n_2}q_{n_2}^2[n_2]_{p_{n_2},q_{n_2}}[n_2-1]_{p_{n_2},q_{n_2}}}{%
[n_2+1]_{p_{n_2},q_{n_2}}^2} \frac{1+y}{p_{n_2}+q_{n_2}y} -2\frac{%
p_{n_2}[n_2]_{p_{n_2},q_{n_2}}}{[n_2+1]_{p_{n_2},q_{n_2}}}+1\right) +\frac{%
p_{n_2}^{n_2+1}[n_2]_{p_{n_2},q_{n_2}}}{[n_2+1]_{p_{n_2},q_{n_2}}^2}\frac{y}{%
1+y}\right]^{\frac{1}{2}}\right\}$.\newline
This completes the proof.
\end{proof}

Now we will give an estimate concerning the rate of convergence by means of
Lipschtz type maximal functions. In \cite{ern,biv}, the Lipschtz type
maximal function space on $E \times E \subset \mathbb{R}_+ \times \mathbb{R}%
_+$ is defined as follows:\newline

\begin{equation*}
\widetilde{W}_{\alpha_1,\alpha_2 E^2}=\{ f:\sup (1+u)^{\alpha_1}
(1+v)^{\alpha_2} \left({f}_{\alpha_1, \alpha_2} (u,v)-{f}_{\alpha_1,
\alpha_2} (x,y)\right)
\end{equation*}
\begin{equation*}
\leq {M} \frac{1}{(1+x)^{\alpha_1}}\frac{1}{(1+y)^{\alpha_2}}: x,y \leq 0,~%
\mbox{and}~ (u,v)\in E^2\}
\end{equation*}
where $f$ is bounded and continuous function on $\mathbb{R}_{+}$, ${M}$ is a
positive constant. For $0\leq \alpha _{1}\leq 1,~~0\leq \alpha _{2}\leq 1$,
we define the function $f_{\alpha _{1},\alpha _{2}}$ as follows:
\begin{equation*}
{f}_{\alpha _{1},\alpha _{2}}(u,v)-{f}_{\alpha _{1},\alpha _{2}}(x,y)=\frac{%
\mid f(u,v)-f(x,y)\mid }{\mid u-x\mid ^{\alpha _{1}}\mid v-y\mid ^{\alpha
_{2}}}
\end{equation*}%
Also, let $d(x,E)$ be the distance between $x$ and $E$, that is
\begin{equation*}
d(x,E)=\inf \{\mid x-y\mid ;y\in E\}.
\end{equation*}

\begin{thm}
Let $0 < \alpha_1 \leq 1,~~0< \alpha_2 \leq 1$, then for all $f \in
\widetilde{W}_{\alpha_1, \alpha_2, E^2}$ we have\newline
\newline
$\mid L^{(p_{n_1},p_{n_2}),(q_{n_1},q_{n_2})}_{n_1,n_2} (f;x,y) -f(x,y) \mid$%
\newline
\begin{equation*}
\leq {M}\left( \delta_{n_1}^{\frac{\alpha_1}{2}}(x)\delta_{n_2}^{\frac{%
\alpha_2}{2}}(y) + \delta_{n_1}^{\frac{\alpha_1}{2}}(x)\left(
d(x,E)\right)^{\alpha_1} + \delta_{n_2}^{\frac{\alpha_2}{2}}(y)\left(
d(y,E)\right)^{\alpha_2} +2 \left( d(x,E)\right)^{\alpha_1} \left(
d(y,E)\right)^{\alpha_2}\right)
\end{equation*}
where $\delta_{n_1}(x)$ and $\delta_{n_2}(y)$ are defined in Theorem 4.1.
\end{thm}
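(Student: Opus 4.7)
The plan is to unpack the Lipschitz-type condition inherited from membership in $\widetilde{W}_{\alpha_{1},\alpha_{2},E^{2}}$, insert auxiliary points from $E\times E$ via the triangle inequality, exploit the tensor factorization $L_{n_{1},n_{2}}=A_{n_{1}}^{x}\circ B_{n_{2}}^{y}$ provided by Lemma 2.2, and then apply H\"older's inequality to turn fractional powers into the second central moments already named in Theorem 4.1. A final infimum step will introduce the distances $d(x,E)$ and $d(y,E)$.

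First I would rewrite the defining inequality of the space in the usable form
$$
|f(u,v)-f(x,y)|\le M\left|\frac{u}{1+u}-\frac{x}{1+x}\right|^{\alpha_{1}}\left|\frac{v}{1+v}-\frac{y}{1+y}\right|^{\alpha_{2}},
$$
valid for every $(u,v)\in E^{2}$ and $(x,y)\in\mathbb{R}_{+}^{2}$; this uses the identity $\frac{|u-x|}{(1+u)(1+x)}=\left|\frac{u}{1+u}-\frac{x}{1+x}\right|$ to cancel the weight $(1+u)^{\alpha_{1}}(1+v)^{\alpha_{2}}$ against $|u-x|^{\alpha_{1}}|v-y|^{\alpha_{2}}$. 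For an arbitrary $(x_{0},y_{0})\in E\times E$ I then insert
$$
|f(t,s)-f(x,y)|\le |f(t,s)-f(x_{0},y_{0})|+|f(x_{0},y_{0})-f(x,y)|,
$$
where $(t,s)$ stands for the point at which the operator samples; since $(x_{0},y_{0})\in E^{2}$, the Lipschitz estimate applies to both summands.

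Next I would use the subadditivity of $t\mapsto t^{\alpha_{i}}$ (valid for $0<\alpha_{i}\le 1$) coordinate-wise to split $\left|\tfrac{t}{1+t}-\tfrac{x_{0}}{1+x_{0}}\right|^{\alpha_{1}}\le\left|\tfrac{t}{1+t}-\tfrac{x}{1+x}\right|^{\alpha_{1}}+\left|\tfrac{x}{1+x}-\tfrac{x_{0}}{1+x_{0}}\right|^{\alpha_{1}}$ and analogously in $s$, multiply the two binomials out, and apply $L_{n_{1},n_{2}}^{(p_{n_{1}},p_{n_{2}}),(q_{n_{1}},q_{n_{2}})}$. Using Lemma 2.2 together with $L(e_{00};x,y)=1$ from Lemma 2.1(i), every one-variable factor is treated in its own coordinate. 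The quantitative heart is H\"older's inequality with conjugate exponents $2/\alpha_{i}$ and $2/(2-\alpha_{i})$, which yields
$$
A_{n_{1}}^{x}\!\left(\left|\frac{t}{1+t}-\frac{x}{1+x}\right|^{\alpha_{1}}\!;x\right)\le\left[A_{n_{1}}^{x}\!\left(\left(\frac{t}{1+t}-\frac{x}{1+x}\right)^{2}\!;x\right)\right]^{\alpha_{1}/2}=\delta_{n_{1}}(x)^{\alpha_{1}/2},
$$
the final equality being a routine expansion of the second central moment via Lemma 2.1(i),(ii),(iv), and similarly $\delta_{n_{2}}(y)^{\alpha_{2}/2}$ on the $v$-side.

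Finally, the factors that are independent of the integration variables are estimated by the elementary bound $\left|\tfrac{x}{1+x}-\tfrac{x_{0}}{1+x_{0}}\right|\le |x-x_{0}|$; taking the infimum over $(x_{0},y_{0})\in E\times E$ then replaces $|x-x_{0}|^{\alpha_{1}}$ and $|y-y_{0}|^{\alpha_{2}}$ by $d(x,E)^{\alpha_{1}}$ and $d(y,E)^{\alpha_{2}}$ respectively, the two infima being taken independently since the controlling variables are uncoupled. Collecting the contributions yields a bound of the claimed shape, with the factor $2$ on the $d(x,E)^{\alpha_{1}}d(y,E)^{\alpha_{2}}$ piece arising because one copy is produced by the product expansion of the first summand while a second copy comes directly from the $|f(x_{0},y_{0})-f(x,y)|$ summand. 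The main obstacle is disciplined bookkeeping across the five intermediate terms and making sure H\"older is applied in each coordinate with the correct exponent so that the second moments align precisely with the $\delta_{n_{1}}(x)$ and $\delta_{n_{2}}(y)$ of Theorem 4.1.
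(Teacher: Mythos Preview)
Your proposal is correct and follows essentially the same route as the paper's proof: triangle inequality through an auxiliary point $(x_{0},y_{0})\in E\times E$, the Lipschitz-type bound from $\widetilde{W}_{\alpha_{1},\alpha_{2},E^{2}}$, the subadditivity $(a+b)^{\alpha}\le a^{\alpha}+b^{\alpha}$ applied coordinate-wise, and then H\"older with exponents $2/\alpha_{i}$, $2/(2-\alpha_{i})$ to reduce to the second central moments $\delta_{n_{i}}$. The only cosmetic difference is that the paper selects $(x_{0},y_{0})$ at the outset as a point realising $d(x,E)$ and $d(y,E)$, whereas you keep $(x_{0},y_{0})$ arbitrary and pass to the infimum at the end; the two are equivalent here.
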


\begin{proof}
Let $x,y \in \mathbb{R}_+$, then there exits a $(x_0,y_0) \in {E} \times E$
such that $\mid x-x_0 \mid = d(x,E)$, and $\mid y-y_0 \mid = d(y,E)$. Thus
we can write
\begin{equation*}
\mid f(u,v) -f(x,y) \mid \leq \mid f(u,v)-f(x_0,y_0) \mid + \mid
f(x_0,y_0)-f(x,y) \mid.
\end{equation*}

Since $L^{(p_{n_1},p_{n_2}),(q_{n_1},q_{n_2})}_{n_1,n_2}$ is a positive
linear operator, $f \in \widetilde{W}_{\alpha_1, \alpha_2, E^2}$ and by
using the previous inequality we have\newline
\newline
$\mid L^{(p_{n_1},p_{n_2}),(q_{n_1},q_{n_2})}_{n_1,n_2}(f;x,y) -f(x,y) \mid $%
\newline
$\leq \mid L^{(p_{n_1},p_{n_2}),(q_{n_1},q_{n_2})}_{n_1,n_2}(\mid
f(u,v)-f(x_0,y_0) \mid ;x,y)+ \mid f(x_0,y_0)-f(x,y) \mid
L^{(p_{n_1},p_{n_2}),(q_{n_1},q_{n_2})}_{n_1,n_2}(e_{00};x,y)$\newline

$\leq {M} L^{(p_{n_1},p_{n_2}),(q_{n_1},q_{n_2})}_{n_1,n_2}\left( \bigg{|}
\frac{u}{1+u}-\frac{x_0}{1+x_0} \bigg{|}^{\alpha_1} \bigg{|} \frac{v}{1+v}-%
\frac{y_0}{1+y_0} \bigg{|}^{\alpha_2} ;x,y \right)$\newline
$+{M} \bigg{|} \frac{x}{1+x}-\frac{x_0}{1+x_0} \bigg{|}^{\alpha_1} \bigg{|}
\frac{y}{1+y}-\frac{y_0}{1+y_0} \bigg{|}^{\alpha_2}
L^{(p_{n_1},p_{n_2}),(q_{n_1},q_{n_2})}_{n_1,n_2}(e_{00};x,y) $\newline

Since $(a+b)^\alpha\leq a^\alpha +b^\alpha$, for $a,b \geq 0$ and $0 \leq
\alpha \leq 1$, which consequently imply

\begin{equation*}
\bigg{|} \frac{u}{1+u}-\frac{x_0}{1+x_0} \bigg{|}^{\alpha_1} \leq \bigg{|}
\frac{u}{1+u}-\frac{x}{1+x} \bigg{|}^{\alpha_1} + \bigg{|} \frac{x}{1+x}-%
\frac{x_0}{1+x_0} \bigg{|}^{\alpha_1}
\end{equation*}
\begin{equation*}
\bigg{|} \frac{v}{1+v}-\frac{y_0}{1+y_0} \bigg{|}^{\alpha_2} \leq \bigg{|}
\frac{v}{1+v}-\frac{y}{1+y} \bigg{|}^{\alpha_2} + \bigg{|} \frac{y}{1+y}-%
\frac{y_0}{1+y_0} \bigg{|}^{\alpha_2}
\end{equation*}
\begin{eqnarray*}
\mid L^{(p_{n_1},p_{n_2}),(q_{n_1},q_{n_2})}_{n_1,n_2}(f;x,y) -f(x,y) \mid &
\leq & L^{(p_{n_1},p_{n_2}),(q_{n_1},q_{n_2})}_{n_1,n_2}\left( \bigg{|}
\frac{u}{1+u}-\frac{x}{1+x} \bigg{|}^{\alpha_1} \bigg{|} \frac{v}{1+v}-\frac{%
y}{1+y} \bigg{|}^{\alpha_2} ;x,y \right) \\
&+& \bigg{|} \frac{y}{1+y}-\frac{y_0}{1+y_0} \bigg{|}^{\alpha_2}
L^{(p_{n_1},p_{n_2}),(q_{n_1},q_{n_2})}_{n_1,n_2}\left( \bigg{|} \frac{u}{1+u%
}-\frac{x}{1+x} \bigg{|}^{\alpha_1} ;x,y \right) \\
&+& \bigg{|} \frac{x}{1+x}-\frac{x_0}{1+x_0} \bigg{|}^{\alpha_1}
L^{(p_{n_1},p_{n_2}),(q_{n_1},q_{n_2})}_{n_1,n_2}\left( \bigg{|} \frac{v}{1+v%
}-\frac{y}{1+y} \bigg{|}^{\alpha_2} ;x,y \right) \\
&+& \bigg{|} \frac{x}{1+x}-\frac{x_0}{1+x_0} \bigg{|}^{\alpha_1} \bigg{|}
\frac{y}{1+y}-\frac{y_0}{1+y_0} \bigg{|}^{\alpha_2}
L^{(p_{n_1},p_{n_2}),(q_{n_1},q_{n_2})}_{n_1,n_2}(e_{00};x,y).
\end{eqnarray*}
By applying the Hölder inequality with $p_1=\frac{2}{\alpha_1},~~p_2= \frac{2%
}{\alpha_2}$ and $q_1 =\frac{2}{2-\alpha_1},~~q_2 =\frac{2}{2-\alpha_2}$,
then we have

$L^{(p_{n_1},p_{n_2}),(q_{n_1},q_{n_2})}_{n_1,n_2}\left( \bigg{|} \frac{u}{%
1+u}-\frac{x}{1+x} \bigg{|}^{\alpha_1} \bigg{|} \frac{v}{1+v}-\frac{y}{1+y} %
\bigg{|}^{\alpha_2} ;x,y \right)$\newline
$= L^{(p_{n_1},p_{n_2}),(q_{n_1},q_{n_2})}_{n_1,n_2}\left( \bigg{|} \frac{u}{%
1+u}-\frac{x}{1+x} \bigg{|}^{\alpha_1};x,y \right)
L^{(p_{n_1},p_{n_2}),(q_{n_1},q_{n_2})}_{n_1,n_2}\left( \bigg{|}\frac{v}{1+v}%
-\frac{y}{1+y} \bigg{|}^{\alpha_2} ;x,y \right)$\newline
$\leq \left(L^{(p_{n_1},p_{n_2}),(q_{n_1},q_{n_2})}_{n_1,n_2}\left( \bigg{|}
\frac{u}{1+u}-\frac{x}{1+x} \bigg{|}^{2};x,y \right)\right)^{\frac{\alpha_1}{%
2}} \left(
L^{(p_{n_1},p_{n_2}),(q_{n_1},q_{n_2})}_{n_1,n_2}(e_{00};x,y)\right)^{\frac{%
2-\alpha_1}{2}}$\newline
$\times \left(L^{(p_{n_1},p_{n_2}),(q_{n_1},q_{n_2})}_{n_1,n_2}\left( %
\bigg{|} \frac{v}{1+v}-\frac{y}{1+y} \bigg{|}^{2};x,y \right)\right)^{\frac{%
\alpha_2}{2}} \left(
L^{(p_{n_1},p_{n_2}),(q_{n_1},q_{n_2})}_{n_1,n_2}(e_{00};x,y) \right)^{\frac{%
2-\alpha_2}{2}}$.\newline
This completes the proof.
\end{proof}

\begin{cor}
If it is taken $E= \mathbb{R}_+$ as a special case of Theorem 4.2, then for
all $f \in \widetilde{W}_{\alpha_1, \alpha_2, \mathbb{R}_+^2}$, we have
\begin{equation*}
\mid L^{(p_{n_1},p_{n_2}),(q_{n_1},q_{n_2})}_{n_1,n_2} (f;x,y) -f(x,y) \mid
\leq \mathcal{M} \left(\delta_{n_1}^{\frac{\alpha_1}{2}}(x)\delta_{n_2}^{%
\frac{\alpha_2}{2}}(y)\right),
\end{equation*}
\end{cor}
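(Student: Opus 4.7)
The plan is to obtain this corollary as an immediate specialization of Theorem 4.2 by exploiting the fact that the distance function $d(\cdot, E)$ collapses when $E = \mathbb{R}_+$. Specifically, since every $x \in \mathbb{R}_+$ already belongs to $\mathbb{R}_+$, the infimum
\[
d(x, \mathbb{R}_+) = \inf\{|x - y| : y \in \mathbb{R}_+\}
\]
is attained at $y = x$ and equals $0$; likewise $d(y, \mathbb{R}_+) = 0$ for every $y \in \mathbb{R}_+$.

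First I would invoke Theorem 4.2 with the choice $E = \mathbb{R}_+$, which yields the estimate
\[
|L^{(p_{n_1},p_{n_2}),(q_{n_1},q_{n_2})}_{n_1,n_2}(f;x,y) - f(x,y)| \leq M\left(\delta_{n_1}^{\alpha_1/2}(x)\delta_{n_2}^{\alpha_2/2}(y) + \delta_{n_1}^{\alpha_1/2}(x)(d(x,E))^{\alpha_1} + \delta_{n_2}^{\alpha_2/2}(y)(d(y,E))^{\alpha_2} + 2(d(x,E))^{\alpha_1}(d(y,E))^{\alpha_2}\right),
\]
valid for all $f \in \widetilde{W}_{\alpha_1,\alpha_2,\mathbb{R}_+^2}$. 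Then, substituting $d(x,E) = d(y,E) = 0$ (and using $0^{\alpha_i} = 0$, which is legitimate because $\alpha_1, \alpha_2 > 0$), the last three terms on the right-hand side vanish identically.

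What remains is the single term $M\,\delta_{n_1}^{\alpha_1/2}(x)\delta_{n_2}^{\alpha_2/2}(y)$, which, after relabeling the constant $M$ as $\mathcal{M}$, is precisely the claimed bound. There is no genuine obstacle here: the only subtlety worth flagging is the need for strict positivity of $\alpha_1$ and $\alpha_2$ to justify the convention $0^{\alpha_i} = 0$, and this is already built into the hypothesis $0 < \alpha_1 \leq 1,\ 0 < \alpha_2 \leq 1$ of Theorem 4.2. Hence the corollary follows directly.
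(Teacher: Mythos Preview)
Your proposal is correct and matches the paper's approach exactly: the paper states this result as an immediate corollary of Theorem 4.2 without giving a separate proof, and your argument---setting $E=\mathbb{R}_+$ so that $d(x,E)=d(y,E)=0$ and the last three terms of the Theorem 4.2 estimate vanish---is precisely the intended specialization.
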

where $\delta_{n_1}(x)$ and $\delta_{n_2}(y)$ are defined in Theorem 4.1.

\section{\textbf{{\ Some Generalization of $%
L^{(p_{n_1},p_{n_2}),(q_{n_1},q_{n_2})}_{n_1,n_2}$}}}

In this section, similar as in \cite{aral1,n1}, we define some
generalization of the operators (2.1) for $(p,q)$-integers as follows:%
\newline
\newline
$L_{n_{1},n_{2}}^{(p_{n_{1}},p_{n_{2}}),(q_{n_{1}},q_{n_{2}}),(\gamma
_{1},\gamma _{2})}(f;x,y)$\newline
\begin{equation*}
=\frac{1}{\ell _{n_{1}}^{(p_{n_{1}},q_{n_{1}})}(x)}\frac{1}{\ell
_{n_{2}}^{(p_{n_{2}},q_{n_{2}})}(y)}\sum_{k_{1}=0}^{n_{1}}%
\sum_{k_{2}=0}^{n_{2}}f\left( \frac{%
p_{n_{1}}^{n_{1}+1-k_{1}}[k_{1}]_{p_{n_{1}},q_{n_{1}}}+\gamma _{1}}{%
b_{n_{1},k_{1}}},\frac{%
p_{n_{2}}^{n_{2}+1-k_{2}}[k_{2}]_{p_{n_{2}},q_{n_{2}}}+\gamma _{2}}{%
b_{n_{2},k_{2}}}\right)
\end{equation*}%
\begin{equation*}
\times p_{n_{1}}^{\frac{(n_{1}-k_{1})(n_{1}-k_{1}-1)}{2}}q_{n_{1}}^{\frac{%
k_{1}(k_{1}-1)}{2}}p_{n_{2}}^{\frac{(n_{2}-k_{2})(n_{2}-k_{2}-1)}{2}%
}q_{n_{2}}^{\frac{k_{2}(k_{2}-1)}{2}}\left[
\begin{array}{c}
n_{1} \\
k_{1}%
\end{array}%
\right] _{p_{n_{1}},q_{n_{1}}}\left[
\begin{array}{c}
n_{2} \\
k_{2}%
\end{array}%
\right] _{p_{n_{2}},q_{n_{2}}}x^{k_{1}}y^{k_{2}},~~~~~~(\gamma _{1},\gamma
_{2}\in \mathbb{R})~~~~~~~~~~~~~~~~~~~~~~~\eqno(5.1)
\end{equation*}
where $b_{n_i,k_i}$ for $i=1,2$ satisfies the following conditions:
\begin{equation*}
p_{n_i}^{n_i-k_i+1}[k_i]_{p_{n_i},q_{n_i}}+b_{n_i,k_i}=c_{n_i}~~~~~~%
\mbox{and}~~~~ \frac{[n_i]_{p_{n_i},q_{n_i}}}{c_{n_i}}\to 1,~~~~~~\mbox{for}%
~~~~n_i \to \infty.
\end{equation*}
It is easy to check that if $%
b_{n_i,k_i}=q_{n_i}^{k_i}[n_i-k_i+1]_{p_{n_i},q_{n_i}}+\beta_i$ for any $%
n_i,k_i$ and $0<q_{n_i}<p_{n_i}\leq 1$, then $%
c_{n_i}=[n_i+1]_{p_{n_i},q_{n_i}}+\beta_i$. If we choose $%
p_{n_i}=1,~~\gamma_i=0$, for $i=1,2$ then operators reduces to $q$%
-bivariate-BBH opeartors defined in \cite{ern,biv}.

\begin{thm}
Let $p=p_{n_1},~~p=p_{n_2}$ and $q=q_{n_1},~~q=q_{n_2}$ satisfying (3.1),
for $0<q_{n_1}<p_{n_1}\leq 1,~~0<q_{n_2}<p_{n_2}\leq 1$ and if $%
L^{(p_{n_1},p_{n_2}),(q_{n_1},q_{n_2}),(\gamma_1,\gamma_2)}_{n_1,n_2}$ is
defined by (5.1), then for any function $f \in \widetilde{W}_{\alpha_1,
\alpha_2, E^2}$, we have\newline

$\lim_{n_1,n_2} \parallel
L^{(p_{n_1},p_{n_2}),(q_{n_1},q_{n_2}),(\gamma_1,%
\gamma_2)}_{n_1,n_2}(f;x,y)-f(x,y) \parallel_{C_{B}(\mathbb{R}^2_+)} $%
\newline
$\leq 3 M \max \left( \frac{[n_1]_{p_{n_1},q_{n_1}}}{c_{n_1} +\gamma_1}%
\right)^{\alpha_1} \left( \frac{\gamma_1}{[n_1]_{p_{n_1},q_{n_1}}}%
\right)^{\alpha_1} \left( \frac{[n_2]_{p_{n_2},q_{n_2}}}{c_{n_2} +\gamma_2}%
\right)^{\alpha_2} \left( \frac{\gamma_2}{[n_2]_{p_{n_2},q_{n_2}}}%
\right)^{\alpha_2},$\newline

$\bigg{|} 1- \frac{[n_1+1]_{p_{n_1},q_{n_1}}}{c_{n_1}+\gamma_1}\bigg{|}%
^{\alpha_1} \left( \frac{p_{n_1}[n_1]_{p_{n_1},q_{n_1}}}{%
[n_1+1]_{p_{n_1},q_{n_1}}} \right)^{\alpha_1} \bigg{|} 1- \frac{%
[n_2+1]_{p_{n_2},q_{n_2}}}{c_{n_2}+\gamma_2}\bigg{|}^{\alpha_2} \left( \frac{%
p_{n_2}[n_2]_{p_{n_2},q_{n_2}}}{[n_2+1]_{p_{n_2},q_{n_2}}}
\right)^{\alpha_2},$\newline

$\left(1- 2\frac{p_{n_1}[n_1]_{p_{n_1},q_{n_1}}}{[n_1+1]_{p_{n_1},q_{n_1}}}+
\frac{[n_1]_{p_{n_1},q_{n_1}}[n_1-1]_{p_{n_1},q_{n_1}}}{%
[n_1+1]_{p_{n_1},q_{n_1}}^2}q_{n_1}\right) \left(1- 2\frac{%
p_{n_2}[n_2]_{p_{n_2},q_{n_2}}}{[n_2+1]_{p_{n_2},q_{n_2}}}+ \frac{%
[n_2]_{p_{n_2},q_{n_2}}[n_2-1]_{p_{n_2},q_{n_2}}}{[n_2+1]_{p_{n_2},q_{n_2}}^2%
}q_{n_2}\right).$
\end{thm}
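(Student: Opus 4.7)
The plan is to adapt the strategy used for Theorem 4.2 to the shifted operator $L^{(p_{n_1},p_{n_2}),(q_{n_1},q_{n_2}),(\gamma_1,\gamma_2)}_{n_1,n_2}$. The first step is to exploit the algebraic identity that, for the sample point $u=\frac{p_{n_1}^{n_1+1-k_1}[k_1]_{p_{n_1},q_{n_1}}+\gamma_1}{b_{n_1,k_1}}$, the normalisation hypothesis $p_{n_1}^{n_1-k_1+1}[k_1]_{p_{n_1},q_{n_1}}+b_{n_1,k_1}=c_{n_1}$ immediately yields
\[
\frac{u}{1+u}=\frac{p_{n_1}^{n_1+1-k_1}[k_1]_{p_{n_1},q_{n_1}}+\gamma_1}{c_{n_1}+\gamma_1},
\]
and analogously for $v/(1+v)$. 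This reduces all test-function computations to close analogues of Lemma~2.1 in which $c_{n_i}+\gamma_i$ plays the role of $[n_i+1]_{p_{n_i},q_{n_i}}$, up to an additive correction proportional to $\gamma_i/(c_{n_i}+\gamma_i)$ coming from the translation.

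Next, I would invoke membership in $\widetilde{W}_{\alpha_1,\alpha_2,E^2}$: choosing $(x_0,y_0)\in E^2$ with $|x-x_0|=d(x,E)$ and $|y-y_0|=d(y,E)$, and applying the triangle inequality together with the defining Lipschitz bound of $\widetilde{W}_{\alpha_1,\alpha_2,E^2}$ and the subadditivity $(a+b)^{\alpha}\leq a^{\alpha}+b^{\alpha}$ for $0\le\alpha\le 1$, one splits $|L^{(\cdot),(\gamma_1,\gamma_2)}_{n_1,n_2}(f;x,y)-f(x,y)|$ into a sum of four contributions in exactly the fashion of the proof of Theorem~4.2. Three of them are then treated by H\"older's inequality with conjugate exponents $(2/\alpha_i,\,2/(2-\alpha_i))$, so that each $\alpha_i$-order central moment of $u/(1+u)-x/(1+x)$ is controlled by the square root of the corresponding second central moment.

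The heart of the argument is therefore the computation of $L^{(\cdot),(\gamma_1,\gamma_2)}_{n_1,n_2}\bigl(\bigl(\tfrac{u}{1+u}-\tfrac{x}{1+x}\bigr)^{2};x,y\bigr)$. Expanding the square and substituting the identity above, this second moment splits into precisely three qualitatively distinct pieces: (a) a \emph{shift-bias} term driven by $\gamma_i/(c_{n_i}+\gamma_i)$, which via the hypothesis $[n_i]_{p_{n_i},q_{n_i}}/c_{n_i}\to 1$ matches the first argument of the $\max$; (b) a \emph{normalisation-mismatch} term proportional to $\bigl(1-[n_i+1]_{p_{n_i},q_{n_i}}/(c_{n_i}+\gamma_i)\bigr)^{2}\bigl(p_{n_i}[n_i]_{p_{n_i},q_{n_i}}/[n_i+1]_{p_{n_i},q_{n_i}}\bigr)^{2}$ reflecting the discrepancy between $c_{n_i}+\gamma_i$ and the natural denominator $[n_i+1]_{p_{n_i},q_{n_i}}$, matching the second argument of the $\max$; and (c) an \emph{intrinsic variance} term $1-2\,p_{n_i}[n_i]_{p_{n_i},q_{n_i}}/[n_i+1]_{p_{n_i},q_{n_i}}+q_{n_i}[n_i]_{p_{n_i},q_{n_i}}[n_i-1]_{p_{n_i},q_{n_i}}/[n_i+1]_{p_{n_i},q_{n_i}}^{2}$ inherited from Lemma~2.1(iv) in the unperturbed case, matching the third. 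Multiplying by the corresponding factors in the second variable reproduces the three products in the stated bound; the factor $3M$ absorbs the three-way decomposition together with the Lipschitz constant.

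The main obstacle is the bookkeeping in the second-moment decomposition: cleanly separating the three types of error (shift bias, normalisation mismatch, intrinsic variance) and verifying that the cross terms arising from the square either cancel or can be absorbed into one of the three named pieces is the delicate part. Once this is done, taking $n_{1},n_{2}\to\infty$ and invoking (3.1) together with $[n_{i}]_{p_{n_i},q_{n_i}}/c_{n_i}\to 1$ drives each contribution to zero, completing the proof.
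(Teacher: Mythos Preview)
Your approach is genuinely different from the paper's, and in one respect it would produce terms that are absent from the stated bound.

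The paper does \emph{not} compute the second central moments of the perturbed operator $L^{(\cdot),(\gamma_1,\gamma_2)}_{n_1,n_2}$ at all. Instead it telescopes at the level of function values: for each sample index $(k_1,k_2)$ it inserts two intermediate evaluation points, writing
\[
f\!\Bigl(\tfrac{p^{n-k+1}[k]+\gamma}{b_{n,k}}\Bigr)-f(x,y)
=\Bigl[f\!\bigl(\tfrac{p^{n-k+1}[k]+\gamma}{b_{n,k}}\bigr)-f\!\bigl(\tfrac{p^{n-k+1}[k]}{\gamma+b_{n,k}}\bigr)\Bigr]
+\Bigl[f\!\bigl(\tfrac{p^{n-k+1}[k]}{\gamma+b_{n,k}}\bigr)-f\!\bigl(\tfrac{p^{n-k+1}[k]}{q^{k}[n-k+1]}\bigr)\Bigr]
+\bigl[L_{n_1,n_2}(f;x,y)-f(x,y)\bigr]
\]
(in each variable). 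The Lipschitz hypothesis is applied directly to the first two brackets; using $p^{n-k+1}[k]+b_{n,k}=c_n$ one finds that the first bracket contributes exactly $(\gamma_i/(c_{n_i}+\gamma_i))^{\alpha_i}$ and the second contributes $\bigl|1-[n_i+1]/(c_{n_i}+\gamma_i)\bigr|^{\alpha_i}$ times $L_{n_1,n_2}\bigl((u/(1+u))^{\alpha_i}\bigr)$. The third bracket is the \emph{unperturbed} operator, handled wholesale by Corollary~4.3, which supplies $M\,\delta_{n_1}^{\alpha_1/2}(x)\,\delta_{n_2}^{\alpha_2/2}(y)$ --- this is what becomes the third argument of the $\max$ after taking the sup in $x,y$. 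Finally the paper applies H\"older with exponents $(1/\alpha_i,\,1/(1-\alpha_i))$, reducing the middle contribution to the \emph{first} moment $L_{n_1,n_2}(u/(1+u))=\tfrac{p_{n_1}[n_1]}{[n_1+1]}\tfrac{x}{1+x}$ from Lemma~2.1(ii), not the second moment.

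By contrast, you propose to (i) repeat the $(x_0,y_0)\in E^2$, $d(x,E)$ machinery of Theorem~4.2 and (ii) expand the second central moment of the perturbed operator into shift-bias, normalisation-mismatch and intrinsic-variance pieces, using H\"older with $(2/\alpha_i,\,2/(2-\alpha_i))$. Point (i) is a genuine mismatch: the stated bound contains no $d(x,E)$ or $d(y,E)$ terms, so bringing them in forces you to assume $E=\mathbb{R}_+$ afterwards, whereas the paper never introduces them. Point (ii) is a legitimate alternative route and your three-way split is morally correct, but it requires fresh moment computations for the perturbed operator; the paper sidesteps this entirely by reducing to the already-known moments of $L_{n_1,n_2}$ via the telescoping above. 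Your path is more self-contained but heavier; the paper's path recycles Lemma~2.1 and Corollary~4.3 and needs only first-moment information for the two correction terms.
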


\begin{proof}
Using (2.1) and (5.1) we have\newline

$\bigg{|} L^{(p_{n_1},p_{n_2}),(q_{n_1},q_{n_2}),(\gamma_1,%
\gamma_2)}_{n_1,n_2}(f;x,y)-f(x,y) \bigg{|} \leq \frac{1}{%
\ell^{(p_{n_1},q_{n_1})}_{n_1}(x)} \frac{1}{\ell^{(p_{n_2},q_{n_2})}_{n_2}(y)%
} \sum_{k_1=0}^{n_1} \sum_{k_2=0}^{n_2}$\newline

$\times \bigg{|} f \left(\frac{p_{n_1}^{n_1-k_1+1}[k_1]_{p_{n_1},q_{n_1}}+%
\gamma_1}{b_{n_1,k_1}}, \frac{p_{n_2}^{n_2-k_2+1}[k_2]_{p_{n_2},q_{n_2}}+%
\gamma_2}{b_{n_2,k_2}} \right) -f\left(\frac{%
p_{n_1}^{n_1-k_1+1}[k_1]_{p_{n_1},q_{n_1}}}{\gamma_1+b_{n_1,k_1}}, \frac{%
p_{n_2}^{n_2-k_2+1}[k_2]_{p_{n_2},q_{n_2}}}{\gamma_2+b_{n_2,k_2}} \right)%
\bigg{|} $\newline

$\times p_{n_1}^{\frac{(n_1-k_1)(n_1-k_1-1)}{2}}q_{n_1}^{\frac{k_1(k_1-1)}{2}%
} p_{n_2}^{\frac{(n_2-k_2)(n_2-k_2-1)}{2}}q_{n_2}^{\frac{k_2(k_2-1)}{2}} %
\left[
\begin{array}{c}
n_1 \\
k_1%
\end{array}%
\right] _{p_{n_1},q_{n_1}} \left[
\begin{array}{c}
n_2 \\
k_2%
\end{array}%
\right] _{p_{n_2},q_{n_2}} x^{k_1}y^{k_2}$\newline

$+ \frac{1}{\ell^{(p_{n_1},q_{n_1})}_{n_1}(x)} \frac{1}{%
\ell^{(p_{n_2},q_{n_2})}_{n_2}(y)}\sum_{k_1=0}^{n_1}\sum_{k_2=0}^{n_2}$%
\newline

$\times \bigg{|} f\left(\frac{p_{n_1}^{n_1-k_1+1}[k_1]_{p_{n_1},q_{n_1}}}{%
\gamma_1+b_{n_1,k_1}}, \frac{p_{n_2}^{n_2-k_2+1}[k_2]_{p_{n_2},q_{n_2}}}{%
\gamma_2+b_{n_2,k_2}} \right) -f\left(\frac{%
p_{n_1}^{n_1-k_1+1}[k_1]_{p_{n_1},q_{n_1}}}{%
[n_1-k_1+1]_{p_{n_1},q_{n_1}}q_{n_1}^{k_1}}, \frac{%
p_{n_2}^{n_2-k_2+1}[k_2]_{p_{n_2},q_{n_2}}}{%
[n_2-k_2+1]_{p_{n_2},q_{n_2}}q_{n_2}^{k_2}} \right)\bigg{|} $\newline

$\times p_{n_1}^{\frac{(n_1-k_1)(n_1-k_1-1)}{2}}q_{n_1}^{\frac{k_1(k_1-1)}{2}%
} p_{n_2}^{\frac{(n_2-k_2)(n_2-k_2-1)}{2}}q_{n_2}^{\frac{k_2(k_2-1)}{2}} %
\left[
\begin{array}{c}
n_1 \\
k_1%
\end{array}%
\right] _{p_{n_1},q_{n_1}} \left[
\begin{array}{c}
n_2 \\
k_2%
\end{array}%
\right] _{p_{n_2},q_{n_2}} x^{k_1} y^{k_2}$\newline

$+ \mid L^{(p_{n_1},p_{n_2}),(q_{n_1},q_{n_2})}_{n_1,n_2}(f;x,y)-f(x,y)
\mid. $\newline

Since $f\in \widetilde{W}_{\alpha _{1},\alpha _{2},E^{2}}$ and by using the
Corollary 4.3 we can write\newline

$\bigg{|} L^{(p_{n_1},p_{n_2}),(q_{n_1},q_{n_2}),(\gamma_1,%
\gamma_2)}_{n_1,n_2}(f;x,y)-f(x,y) \bigg{|}$\newline
$\leq M \frac{1}{\ell^{(p_{n_1},q_{n_1})}_{n_1}(x)} \frac{1}{%
\ell^{(p_{n_2},q_{n_2})}_{n_2}(y)} \sum_{k_1=0}^{n_1}\sum_{k_2=0}^{n_2} %
\bigg{|} \frac{p_{n_1}^{n_1-k_1+1}[k_1]_{p_{n_1},q_{n_1}}+\gamma_1}{%
p_{n_1}^{n_1-k_1+1}[k_1]_{p_{n_1},q_{n_1}}+\gamma_1+b_{n_1,k_1}} -\frac{%
p_{n_1}^{n_1-k_1+1}[k_1]_{p_{n_1},q_{n_1}}}{%
\gamma_1+p_{n_1}^{n_1-k_1+1}[k_1]_{p_{n_1},q_{n_1}}+b_{n_1,k_1}}\bigg{|}%
^{\alpha_1} $\newline

$\times \bigg{|} \frac{p_{n_2}^{n_2-k_2+1}[k_2]_{p_{n_2},q_{n_2}}+\gamma_2}{%
p_{n_2}^{n_2-k_2+1}[k_2]_{p_{n_2},q_{n_2}}+\gamma_2+b_{n_2,k_2}} -\frac{%
p_{n_2}^{n_2-k_2+1}[k_2]_{p_{n_2},q_{n_2}}}{%
\gamma_2+p_{n_2}^{n_2-k_2+1}[k_2]_{p_{n_2},q_{n_2}}+b_{n_2,k_2}}\bigg{|}%
^{\alpha_2}$\newline

$\times p_{n_1}^{\frac{(n_1-k_1)(n_1-k_1-1)}{2}}q_{n_1}^{\frac{k_1(k_1-1)}{2}%
}p_{n_2}^{\frac{(n_2-k_2)(n_2-k_2-1)}{2}}q_{n_2}^{\frac{k_2(k_2-1)}{2}} %
\left[
\begin{array}{c}
n_1 \\
k_1%
\end{array}%
\right] _{p_{n_1},q_{n_1}} \left[
\begin{array}{c}
n_2 \\
k_2%
\end{array}%
\right] _{p_{n_2},q_{n_2}} x^{k_1} y^{k_2}$\newline

$+ M \frac{1}{\ell^{(p_{n_1},q_{n_1})}_{n_1}(x)} \frac{1}{%
\ell^{(p_{n_2},q_{n_2})}_{n_2}(y)} \sum_{k_1=0}^{n_1}\sum_{k_2=0}^{n_2} %
\bigg{|} \frac{p_{n_1}^{n_1-k_1+1}[k_1]_{p_{n_1},q_{n_1}}}{%
p_{n_1}^{n_1-k_1+1}[k_1]_{p_{n_1},q_{n_1}}+\gamma_1+ b_{n_1,k_1}} -\frac{%
p_{n_1}^{n_1-k_1+1}[k_1]_{p_{n_1},q_{n_1}}}{%
p_{n_1}^{n_1-k_1+1}[k_1]_{p_{n_1},q_{n_1}}+[n_1-k_1+1]_{p_{n_1},q_{n_1}}q_{n_1}^{k_1}%
}\bigg{|}$\newline

$\times \bigg{|} \frac{p_{n_2}^{n_2-k_2+1}[k_2]_{p_{n_2},q_{n_2}}}{%
p_{n_2}^{n_2-k_2+1}[k_2]_{p_{n_2},q_{n_2}}+\gamma_2+ b_{n_2,k_2}} -\frac{%
p_{n_2}^{n_2-k_2+1}[k_2]_{p_{n_2},q_{n_2}}}{%
p_{n_2}^{n_2-k_2+1}[k_2]_{p_{n_2},q_{n_2}}+[n_2-k_2+1]_{p_{n_2},q_{n_2}}q_{n_2}^{k_2}%
}\bigg{|}$\newline

$\times p_{n_1}^{\frac{(n_1-k_1)(n_1-k_1-1)}{2}}q_{n_1}^{\frac{k_1(k_1-1)}{2}%
} p_{n_2}^{\frac{(n_2-k_2)(n_2-k_2-1)}{2}}q_{n_2}^{\frac{k_2(k_2-1)}{2}} %
\left[
\begin{array}{c}
n_1 \\
k_1%
\end{array}%
\right] _{p_{n_1},q_{n_1}} \left[
\begin{array}{c}
n_2 \\
k_2%
\end{array}%
\right] _{p_{n_2},q_{n_2}} x^{k_1}y^{k_2} $\newline
$+ M \delta_{n_1}^{\frac{\alpha_1}{2}}(x) \delta_{n_2}^{\frac{\alpha_2}{2}%
}(y).$\newline

This implies that\newline

$\bigg{|} L^{(p_{n_1},p_{n_2}),(q_{n_1},q_{n_2}),(\gamma_1,%
\gamma_2)}_{n_1,n_2}(f;x,y)-f(x,y) \bigg{|}$\newline

$\leq M \left( \frac{[n_1]_{p_{n_1},q_{n_1}}}{c_{n_1} +\gamma_1}%
\right)^{\alpha_1}\left( \frac{\gamma_1}{[n_1]_{p_{n_1},q_{n_1}}}%
\right)^{\alpha_1} \left( \frac{[n_2]_{p_{n_2},q_{n_2}}}{c_{n_2} +\gamma_2}%
\right)^{\alpha_2}\left( \frac{\gamma_2}{[n_2]_{p_{n_2},q_{n_2}}}%
\right)^{\alpha_2} $\newline

$+M \frac{1}{\ell^{(p_{n_1},q_{n_1})}_{n_1}(x)} \frac{1}{%
\ell^{(p_{n_2},q_{n_2})}_{n_2}(y)} \bigg{|} 1- \frac{%
[n_1+1]_{p_{n_1},q_{n_1}}}{c_{n_1}+\gamma_1} \bigg{|}^{\alpha_1} \bigg{|} 1-
\frac{[n_2+1]_{p_{n_2},q_{n_2}}}{c_{n_2}+\gamma_2} \bigg{|}^{\alpha_2}$%
\newline
$\times \sum_{k_1=0}^{n_1} \sum_{k_2=0}^{n_2} \left(\frac{%
p_{n_1}^{n_1-k_1+1}[k_1]_{p_{n_1},q_{n_1}}}{[n_1+1]_{p_{n_1},q_{n_1}}}
\right)^{\alpha_1} \left(\frac{p_{n_2}^{n_2-k_2+1}[k_2]_{p_{n_2},q_{n_2}}}{%
[n_2+1]_{p_{n_2},q_{n_2}}} \right)^{\alpha_2} $\newline

$\times p_{n_1}^{\frac{(n_1-k_1)(n_1-k_1-1)}{2}}q_{n_1}^{\frac{k_1(k_1-1)}{2}%
} p_{n_2}^{\frac{(n_2-k_2)(n_2-k_2-1)}{2}}q_{n_2}^{\frac{k_2(k_2-1)}{2}} %
\left[
\begin{array}{c}
n_1 \\
k_1%
\end{array}%
\right] _{p_{n_1},q_{n_1}} \left[
\begin{array}{c}
n_2 \\
k_2%
\end{array}%
\right] _{p_{n_2},q_{n_2}} x^{k_1}y^{k_2} + M \delta_{n_1}^{\frac{\alpha_1}{2%
}}(x) \delta_{n_2}^{\frac{\alpha_2}{2}}(y)$\newline

$= M \left( \frac{[n_1]_{p_{n_1},q_{n_1}}}{c_{n_1} +\gamma_1}%
\right)^{\alpha_1}\left( \frac{\gamma_1}{[n_1]_{p_{n_1},q_{n_1}}}%
\right)^{\alpha_1} \left( \frac{[n_2]_{p_{n_2},q_{n_2}}}{c_{n_2} +\gamma_2}%
\right)^{\alpha_2}\left( \frac{\gamma_2}{[n_2]_{p_{n_2},q_{n_2}}}%
\right)^{\alpha_2}$\newline

$+M \bigg{|} 1- \frac{[n_1+1]_{p_{n_1},q_{n_1}}}{c_{n_1}+\gamma_1} \bigg{|}%
^{\alpha_1} \bigg{|} 1- \frac{[n_2+1]_{p_{n_2},q_{n_2}}}{c_{n_2}+\gamma_2} %
\bigg{|}^{\alpha_2} L_{p_{n_1},q_{n_1}, p_{n_2},q_{n_2}}^{n_1,n_2}\left(
\left(\frac{u}{1+u}\right)^{\alpha_1};x,y \right) L_{p_{n_1},q_{n_1} ,
p_{n_2},q_{n_2}}^{n_1, n_2}\left( \left(\frac{v}{1+v}\right)^{\alpha_2};x,y
\right)$\newline
$+ M \delta_{n_1}^{\frac{\alpha_1}{2}}(x) \delta_{n_2}^{\frac{\alpha_2}{2}%
}(y).$

Using the Hölder inequality for $p_1= \frac{1}{\alpha_1},~~~q_1 =\frac{1}{%
1-\alpha_1}$, and $p_2= \frac{1}{\alpha_2},~~~q_2 =\frac{1}{1-\alpha_2}$ we
get\newline
\newline

$\bigg{|} L^{(p_{n_1},p_{n_2}),(q_{n_1},q_{n_2}),(\gamma_1,%
\gamma_2)}_{n_1,n_2}(f;x,y)-f(x,y) \bigg{|}$\newline

$\leq M \left( \frac{[n_1]_{p_{n_1},q_{n_1}}}{c_{n_1} +\gamma_1}%
\right)^{\alpha_1} \left( \frac{\gamma_1}{[n_1]_{p_{n_1},q_{n_1}}}%
\right)^{\alpha_1} \left( \frac{[n_2]_{p_{n_2},q_{n_2}}}{c_{n_2} +\gamma_2}%
\right)^{\alpha_2} \left( \frac{\gamma_2}{[n_2]_{p_{n_2},q_{n_2}}}%
\right)^{\alpha_2}$\newline

$+M \bigg{|} 1- \frac{[n_1+1]_{p_{n_1},q_{n_1}}}{c_{n_1}+\gamma_1}\bigg{|}%
^{\alpha_1} \bigg{|} 1- \frac{[n_2+1]_{p_{n_2},q_{n_2}}}{c_{n_2}+\gamma_2}%
\bigg{|}^{\alpha_2} L_{p_{n_1},q_{n_1},p_{n_2},q_{n_2}}^{n_1,n_2}\left(
\frac{u}{1+u} ;x,y \right)^{\alpha_1} \left(
L_{p_{n_1},q_{n_1},p_{n_2},q_{n_2}}^{n_1,n_2}(e_{00};x,y)\right)^{1-%
\alpha_1} $\newline

$\times L_{p_{n_1},q_{n_1},p_{n_2},q_{n_2}}^{n_1,n_2}\left( \frac{v}{1+v}
;x,y \right)^{\alpha_2} \left(
L_{p_{n_1},q_{n_1},p_{n_2},q_{n_2}}^{n_1,n_2}(e_{00};x,y)\right)^{1-%
\alpha_2} + M \delta_{n_1}^{\frac{\alpha_1}{2}}(x) \delta_{n_2}^{\frac{%
\alpha_2}{2}}(y) $\newline

$\leq M \left( \frac{[n_1]_{p_{n_1},q_{n_1}}}{c_{n_1} +\gamma_1}%
\right)^{\alpha_1} \left( \frac{\gamma_1}{[n_1]_{p_{n_1},q_{n_1}}}%
\right)^{\alpha_1} \left( \frac{[n_2]_{p_{n_2},q_{n_2}}}{c_{n_2} +\gamma_2}%
\right)^{\alpha_2} \left( \frac{\gamma_2}{[n_2]_{p_{n_2},q_{n_2}}}%
\right)^{\alpha_2}$\newline

$+M \bigg{|} 1- \frac{[n_1+1]_{p_{n_1},q_{n_1}}}{c_{n_1}+\gamma_1} \bigg{|}%
^{\alpha_1}\bigg{|} 1- \frac{[n_2+1]_{p_{n_2},q_{n_2}}}{c_{n_2}+\gamma_2} %
\bigg{|}^{\alpha_2} \left( \frac{p_{n_1}[n_1]_{p_{n_1},q_{n_1}}}{%
[n_1+1]_{p_{n_1},q_{n_1}}} \frac{x}{1+x}\right)^{\alpha_1} \left( \frac{%
p_{n_2}[n_2]_{p_{n_2},q_{n_2}}}{[n_2+1]_{p_{n_2},q_{n_2}}} \frac{y}{1+y}%
\right)^{\alpha_2} + M \delta_{n_1}^{\frac{\alpha_1}{2}}(x) \delta_{n_2}^{%
\frac{\alpha_2}{2}}(y).$

This completes the proof.
\end{proof}

\newpage

\end{document}